\definecolor{Blue}{rgb}{0.3,0.3,0.9}
\DeclareSymbolFont{cyrillic}{T2A}{cmr}{m}{n}
\DeclareMathSymbol{\Sha}{\mathalpha}{cyrillic}{216}
\theoremstyle{plain}
\newtheorem{thm}{Theorem}[subsection] 
\theoremstyle{definition}
\newtheorem{defn}[thm]{Definition}
\newtheorem{rem}[thm]{Remark}
\newtheorem{choice}[thm]{Choice}
\theoremstyle{definition}
\theoremstyle{plain}
\newtheorem{prop}[thm]{Proposition}
\theoremstyle{plain}
\newtheorem{lem}[thm]{Lemma}
\theoremstyle{plain}
\newtheorem{cor}[thm]{Corollary}
\newtheorem*{thm-intro}{Theorem}
\newtheorem*{cor-intro}{Corollary}
\numberwithin{equation}{section}
\newcommand{\eps}{\varepsilon}
\newcommand{\cc}{\mathbf{c}}
\newcommand{\rH}{{\mathrm{H}}}
\newcommand{\pp}{\mathfrak{p}}
\newcommand{\ppbar}{\overline{\mathfrak{p}}}
\newcommand{\qq}{\mathfrak{q}}
\newcommand{\fa}{\mathfrak{a}}
\newcommand{\bQ}{\mathbf{Q}}
\newcommand{\bZ}{\mathbf{Z}}
\newcommand{\bC}{\mathbf{C}}
\def\cO{\mathcal O}
\newcommand{\VQdag}{{\mathbf{V}_{\underline{Q}}^\dagger}}
\newcommand{\Vsdag}{{\mathbb{V}_{f\boldsymbol{gg}^*}^\dagger}}
\newcommand{\Asdag}{{\mathbb{A}_{f\boldsymbol{gg}^*}^\dagger}}
\newcommand{\Vsdagg}{{\mathbb{V}_{f\underline{\boldsymbol{gg}}^*}^\dagger}}
\newcommand{\Asdagg}{{\mathbb{A}_{f\underline{\boldsymbol{gg}}^*}^\dagger}}
\newcommand{\ggstar}{\underline{\boldsymbol{gg}}^*}
\newcommand{\Vdag}{{\mathbf{V}^\dagger}}
\newcommand{\Adag}{{\mathbf{A}^\dagger}}
\newcommand{\VVdag}{{\mathbb{V}^\dagger}}
\newcommand{\AAdag}{{\mathbb{A}^\dagger}}
\newcommand{\unb}{{\boldsymbol{f}}}
\newcommand{\Q}{\mathbf{Q}}
\newcommand{\Z}{\mathbf{Z}}
\def\makeop#1{\expandafter\def\csname#1\endcsname
	{\mathop{\rm #1}\nolimits}\ignorespaces}
\def\x{{\times}}
\def\al{\alpha}
\def\iso{\simeq}
\newcommand{\dBr}[1]{\llbracket{#1}\rrbracket}
\newcommand{\bT}{\mathbb{T}}
\newcommand{\cR}{\mathbb{I}}
\newcommand{\any}{?}
\newcommand{\bfff}{{\boldsymbol{f}}}
\newcommand{\bff}{{\boldsymbol{f}}}
\newcommand{\bfg}{{\boldsymbol{g}}}
\newcommand{\bfh}{{\boldsymbol{h}}}
\begin{document}
	
\title[Nonvanishing of generalised Kato classes and Iwasawa main conjectures]{Nonvanishing of generalised Kato classes and \\Iwasawa main conjectures}
\author[F.\,Castella]{Francesc Castella}
	
\subjclass[2020]{Primary 11G05; Secondary 11G40}
\date{\today}
		
\address[]{Department of Mathematics, University of California Santa Barbara, CA 93106, USA}
\email{castella@ucsb.edu}




\begin{abstract}
A construction due to Darmon--Rotger gives rise to generalised Kato classes $\kappa_p(E)$ in the $p$-adic Selmer group ${\rm Sel}(\Q,V_pE)$ of elliptic curves $E/\Q$ of positive even analytic rank, where $p>3$ is any  prime of good ordinary reduction for $E$. In \cite{DR2.5}, they  conjectured that
$\kappa_p(E)\neq 0$ precisely when ${\rm Sel}(\Q,V_pE)$ is two-dimensional. The first cases of this conjecture were obtained by the author with M.-L.\,Hsieh \cite{cas-hsieh-ord}. 
In this note  we give a new proof of the implication
\[
\kappa_p(E)\neq 0\quad\Longrightarrow\quad{\rm dim}_{\Q_p}{\rm Sel}(\Q,V_pE)=2
\]
established in \emph{op.\,cit.}, and show that the converse implication holds if \emph{and only if} the restriction map ${\rm loc}_p:{\rm Sel}(\Q,V_pE)\rightarrow E(\Q_p)\hat\otimes\Q_p$ is nonzero. The present approach is an adaptation to the non-CM case of the method introduced by the author  \cite{GKC-cm} in the case of CM elliptic curves.
\end{abstract}


\maketitle

\section{Introduction}

After the grounbreaking works of Gross--Zagier and Kolyvagin in the 1980s, the construction of non-torsion rational points (or more generally, Selmer classes) on elliptic curves of $E/\Q$ with ${\rm ord}_{s=1}L(E,s)\geq 2$, akin to the construction of Heegner points in the cases of analytic rank $0$ or $1$, is widely regarded as one of the central open problems in number theory.

In a recent series of spectacular works \cite{DR1,DR2,DR3,BSV0,BSV} (culminating in the collective volume \cite{BDRSV} with applications to the theory of Stark--Heegner points), Darmon--Rotger and Bertolini--Seveso--Venerucci revisited the construction of diagonal cycle classes due to Gross--Kudla \cite{gross-kudla} and Gross--Schoen \cite{gross-schoen}, obtaining in particular an interpolation of these classes in $p$-adic families. Directly from their geometric construction, one obtains classes attached to triples $(f,g,h)$ of cuspidal eigenforms of ``balanced'' weights $(k,l,m)$ (meaning that none of $k$, $l$, or $m$ is larger than the sum of the other two),  
but after $p$-adic interpolation one also gets classes for weights beyond this range, such as the ``unbalanced'' weight $(2,1,1)$. 

Of special relevance to the Birch--Swinnerton-Dyer conjecture and its equivariant refinements is the case where $f$ is the newform of weight $2$ associated  to an elliptic curve $E/\Q$, and $g$ and  $h$ are the weight $1$ cuspidal eigenforms associated (as a consequence of the proof of Serre's conjecture \cite{KW-I}) to degree $2$ odd irreducible Artin representations $\varrho_1$ and $\varrho_2$, respectively. In this case, one of the main results of \cite{DR2} and \cite{BSV} relates the resulting \emph{generalised Kato classes} 
\[
\kappa_p(f,g,h)\in\rH^1(\Q,V_pE\otimes\varrho),\quad\varrho:=\varrho_1\otimes\varrho_2,
\] 
obtained from a specialisation in weights $(2,1,1)$ of a $p$-adic family of diagonal cycles classes, 
to the value at $s=1$ of the twisted Hasse--Weil $L$-funcion $L(E\otimes\varrho,s)$. Together with global duality, this relation 
leads to a proof of the implication
\[
L(E\otimes\varrho,1)\neq 0\quad\Longrightarrow\quad{\rm Hom}_{G_\Q}(V_{\varrho}^\vee,E(H)\otimes L)=0,
\]
where $L$ is any number field over which $\varrho$ is defined (assumed to admit an embedding into $\Q_p$ for simplicity), $V_{\varrho}^\vee$ is the linear dual of an $L$-vector space affording $\varrho$, and $H$ is the fixed field of $\ker(\varrho)$. More precisely, by virtue of the explicit reciprocity law obtained in \emph{loc.\,cit.}, the nonvanishing of $L(E\otimes\varrho,1)$ implies that the class $\kappa_p(f,g,h)$ is \emph{non-crystalline} at $p$, 
from where the bound on the $\varrho$-isotypical component of $E(H)$ follows by global duality. 

Interestingly, the same explicit reciprocity law shows that the generalised Kato classes $\kappa_p(f,g,h)$ are Selmer whenever $L(E\otimes\varrho,1)=0$. Moreover, the representations $\varrho$ for which the construction applies necessarily have real traces, and in many cases the sign in the functional of the self-dual $L$-function $L(E\otimes\varrho,s)$ is $+1$. Thus the results of \cite{DR2} and \cite{BSV} provide a construction of (possibly zero \emph{a priori}!) Selmer classes in situations where ${\rm ord}_{s=1}L(E\otimes\varrho,s)\geq 2$. 

In \cite{DR2.5}, Darmon--Rotger carried out a systematic study of their construction in relation with the Birch--Swinnerton-Dyer conjecture and the elliptic Stark conjecture of \cite{DLR-stark}. (More recently, a similar study was carried out by Bertolini--Seveso--Venerucci \cite{BSV-garrett}, which in particular allows one to interpret and refine some of the rationality conjectures in \cite{DR2.5} in terms of a $p$-adic Birch--Swinnerton-Dyer conjecture for $p$-adic Garrett--Rankin $L$-functions.) In particular, when $L(E\otimes\varrho,1)=0$ with sign $+1$ they conjectured the equivalence
\begin{equation}\label{eq:nonv-crit}
\kappa_p(f,g,h)\neq 0\quad\overset{?}\Longleftrightarrow\quad{\rm dim}_{\Q_p}{\rm Sel}(\Q,V_pE\otimes\varrho)=2.
\end{equation} 
This leads to the expectation, when combined with the equivariant Birch--Swinnerton-Dyer conjecture, that $\kappa_p(f,g,h)$ is a nonzero class in ${\rm Sel}(\Q,V_pE\otimes V_\varrho)$ if and only if ${\rm ord}_{s=1}L(E\otimes\varrho,s)=2$. 

Of special interest 
to the Birch--Swinnerton-Dyer conjecture is the case in which $\varrho$ contains the trivial representation. 
This occurs precisely when $\varrho_2\simeq\varrho_1^\vee$, so that
\begin{equation}\label{eq:dec-Vintro}
V_{\varrho}\simeq L\oplus{\rm ad}^0(V_{\varrho_1}),\nonumber
\end{equation}
where ${\rm ad}^0(V_{\varrho_1})$ is the $3$-dimensional representation of $G_\Q$ acting on the trace zero endomorphisms of $V_{\varrho_1}$. By the Artin formalism, we then have
\[
L(E\otimes\varrho,s)=L(E,s)\cdot L(E\otimes{\rm ad}^0(\varrho_1)).
\]
In particular, if $L(E\otimes{\rm ad}^0(\varrho_1))\neq 0$, denoting by $\kappa_p(E)$ the image of $\kappa_p(f,g,h)$ under the resulting  natural projection $\rH^1(\Q,V_pE\otimes\varrho)\rightarrow\rH^1(\Q,V_pE)$,  
by the Bloch--Kato conjecture \cite{BK} 
(which in this case predicts that the dimensions of the Selmer groups for $V_pE$ and $V_pE\otimes\varrho$ are the same) 
the equivalence  (\ref{eq:nonv-crit}) amounts to the conjectural equivalence
\begin{equation}\label{eq:nonv-crit-2}
\kappa_p(E)\neq 0\quad\overset{?}\Longleftrightarrow\quad{\rm dim}_{\Q_p}{\rm Sel}(\Q,V_pE)=2.
\end{equation}
%

In \cite{cas-hsieh-ord}, the author and M.-L.\,Hsieh proved the 
first cases of conjecture \eqref{eq:nonv-crit-2}, in situations where $\varrho_1$ is induced from a finite order character of an imaginary quadratic field in which $p$ splits. The key ingredient in the proof 
was a leading coefficient formula for the $p$-adic $L$-function introduced in \cite{BDmumford-tate} in terms of Howard's derived $p$-adic heights. 
\cite{howard-derived}. 

The main result of this note (see Theorem\,\ref{thm:main}) includes a new proof of \cite[Thm.\,A]{cas-hsieh-ord}. The approach is an adaptation of the method introduced by the author in \cite{GKC-cm}, where the case of CM elliptic curves $E/\Q$ is treated, and clarifies the role played by the restriction map at $p$
\[
{\rm loc}_p:{\rm Sel}(\Q,V_pE)\rightarrow E(\bQ_p)\hat\otimes\Q_p.
\]
Indeed, in \emph{loc.\,cit.} it was shown that (under mild hypotheses) the nonvanishing of $\kappa_p(E)$ implies that ${\rm Sel}(\bQ,V_pE)$ is $2$-dimensional, while for the proof of the converse implication 
it was necessary to assume that 
\begin{equation}\label{eq:locp}
{\rm Sel}(\Q,V_pE)\neq{\rm ker}({\rm loc}_p)\tag{${\rm Loc}_p$}. 
\end{equation}
%
A novel insight of the approach in this paper is that, assuming ${\rm dim}_{\Q_p}{\rm Sel}(\bQ,V_pE)=2$, condition \eqref{eq:locp} is in fact \emph{necessary} for the nonvanishing of $\kappa_p(E)$.  


Similarly as in \cite{GKC-cm}, our approach is based on a study of the relation between two different formulations of the Iwasawa--Greenberg main conjecture for triple products studied in 
\cite{ACR}: 
\begin{enumerate}
\item[(IMC-1)] one in terms of Hsieh's triple product $p$-adic $L$-functions \cite{hsieh-triple}; 
\item[(IMC-2)] another  without reference to $p$-adic $L$-functions, phrased in terms of the $p$-adic family of diagonal cycles used in the construction of $\kappa_p(f,g,h)$ and $\kappa_p(E)$.
\end{enumerate}

In the case where $\varrho_1=\varrho_2^\vee$ is dual to $\varrho_2$, and  is induced from a finite order Hecke character of an imaginary quadratic field $K$, conjecture (IMC-1) can be related to the Iwasawa main conjecture for $E/K$ in the anticyclotomic setting, and using global duality and the explicit reciprocity laws of \cite{DR3} and \cite{BSV}, one can easily show that (in general) conjectures (IMC-1) and (IMC-2) are equivalent. Thus, from the results on the anticyclotomic main conjecture stemming from the works of Bertolini--Darmon \cite{bdIMC} and Skinner--Urban \cite{SU}, we deduce under mild hypotheses a proof of conjecture (IMC-2), from where the 
proof of our main result follow easily from a variant of Mazur's control theorem and global duality.

We conclude this introduction by noting that even though throughout the paper $p$ is assumed to be a  prime of \emph{good} ordinary reduction, it seems possible to extend our results to the multiplicative case; it would then be interesting to compare the resulting $\kappa_p(E)$ with the 
$p$-adic limits of Heegner points studied in \cite{DF,FG-mock} (see also \cite{cas-3split} for a related construction).

\subsection{Acknowledgements} 


I heartily thank Matteo Longo, Marco Adamo Seveso, Rodolfo Venerucci, and Stefano Vigni for the opportunity to contribute this note 
to the proceedings in celebration of Massimo Bertolini's 60th birthday. The chief insight exploited in this paper 
is that the nonvanishing of $\kappa_p(E)$ can be related to an Iwasawa main conjecture in the spirit of the Heegner point main conjecture studied by Massimo about 30 years ago \cite{bertolini-PhD}. Some of Massimo's results since then also play a key role in this paper, and it is a pleasure to dedicate it to him as an attestation of his lasting and growing impact on the subject. During the preparation of this note, the author was partially supported by the NSF grants DMS-1946136 and DMS-2101458.

\section{$p$-adic $L$-functions}

Fix a prime $p>3$. Let $K$ be an imaginary quadratic field of discriminant $D_K$ and assume that 
\[
\textrm{$(p)=\pp\ppbar$ splits in $K$}, 
\]
with $\pp$ the prime of $K$ above $p$ induced by a fixed embedding $\iota_p:\overline{\Q}\hookrightarrow\overline{\Q}_p$. In this section we recall from \cite{hsieh-triple} the construction of the triple product $p$-adic $L$-function for Hida families, and its relation with the anticyclotomic $p$-adic $L$-functions of Bertolini--Darmon \cite{BDmumford-tate} in the case where two of the Hida families are CM Hida families attached to dual ray class characters of $K$.

\subsection{Triple product $p$-adic $L$-function}\label{subsec:triple}


Let $\cR$ be a normal domain finite flat over 
\[
\Lambda:=\mathscr{O}\dBr{1+p\Z_p},
\] 
where $\mathscr{O}$ is the ring of integers of a finite extension of $\Q_p$. For a positive integer $N$ prime $p$ and a Dirichlet character $\chi:(\Z/Np\Z)^\times\rightarrow\mathscr{O}^\times$, we denote by $S^o(N,\chi,\cR)\subset\cR\dBr{q}$ the space of ordinary $\cR$-adic cusp forms of tame level $N$ and branch character $\chi$ as defined in \cite[\S{3.1}]{hsieh-triple}. 

Denote by $\mathfrak{X}_\cR^+\subset{\rm Spec}\,\cR(\overline{\Q}_p)$ the set of \emph{arithmetic points} of $\cR$, consisting of the ring homomorphisms $Q:\cR\rightarrow\overline{\Q}_p$ such that $Q\vert_{1+p\Z_p}$ is given by $z\mapsto z^{k_Q}\epsilon_Q(z)$ for some $k_Q\in\Z_{\geq 2}$ called the \emph{weight of $Q$} and $\epsilon_Q(z)\in\mu_{p^\infty}$. As in  \cite[\S{3.1}]{hsieh-triple}, we say that $\boldsymbol{f}=\sum_{n=1}^\infty a_n(\bfff)q^n\in S^o(N,\chi,\cR)$ is a \emph{primitive Hida family} if for every $Q\in\mathfrak{X}_\cR^+$ the specialization $\boldsymbol{f}_Q$ gives the $q$-expansion of an ordinary $p$-stabilised newform of weight  $k_Q$ and tame conductor $N$. Attached to such $\bfff$ we let $\mathfrak{X}_{\cR}^{\rm cls}\subset\mathfrak{X}_{\cR}^+$ be the set of ring homomorphisms $Q$ as above with $k_Q\in\Z$ such that $\bfff_Q$ is the $q$-expansion of a classical modular form.

For $\bfff$ a primitive Hida family of tame level $N$, we let 
\[
\rho_{\bff}:G_{\Q}\rightarrow{\rm GL}_2({\rm Frac}\,\cR)
\] 
denote the associated Galois representation, where ${\rm Frac}\,\cR$ is the field of fractions of $\cR$. It will be convenient for us to take $\rho_{\bff}$ to be the \emph{dual} of that in \cite[\S{3.2}]{hsieh-triple}; in particular, $\det\,\rho_\bfff=\chi_\cR\cdot\varepsilon_{\rm cyc}$ in the notations of \emph{loc.\,cit.}, where $\varepsilon_{\rm cyc}$ is the $p$-adic cyclotomic character. We assume that the associated the residual representation $\bar{\rho}_\bfff$ is absolutely irreducible, and also denote by 
\[
\rho_{\bfff}:G_\Q\rightarrow{\rm Aut}_{\cR}(V_\bfff)\simeq{\rm GL}_2(\cR)
\]
the realisation of $\rho_\bff$ on a module $V_\bff\simeq\cR^{\oplus 2}$.  
  By \cite[Thm.~2.2.2]{wiles88}, restricted to $G_{\Q_p}$ the Galois representation $V_\bfff$ fits into a short exact sequence
\[
0\rightarrow V_\bfff^+\rightarrow V_\bfff\rightarrow V_\bfff^-\rightarrow 0,
\]
where $V_\bfff^-$ is free of rank $1$ over $\cR$, with the $G_{\Q_p}$-action given by the unramified character sending an arithmetic Frobenius ${\rm Fr}_p$ to $a_p(\bfff)$. 

Associated with $\bfff$ there is a $\cR$-algebra homomorphism 
\[
\lambda_{\bfff}:\bT(N,\cR)\rightarrow\cR
\] 
where $\bT(N,\cR)$ is the Hecke algebra acting on $\oplus_\chi S^o(N,\chi,\cR)$, where $\chi$ runs over the characters of $(\Z/pN\Z)^\times$. Let $\bT_{\mathfrak{m}}$ be the local component of $\bT(N,\cR)$ through which $\lambda_{\bfff}$ factors, and following \cite{hida-AJM-modules} define the \emph{congruence ideal} $C(\bfff)$ of $\bfff$ by
\[
C(\bfff):=\lambda_{\bfff}({\rm Ann}_{\bT_\mathfrak{m}}({\rm ker}\,\lambda_\bfff))\subset\cR.
\] 
When, in addition to absolutely irreducible, $\bar{\rho}_{\bfff}$ is also $p$-distinguished (i.e., the semi-simplification of $\bar{\rho}_{\bfff}\vert_{G_{\Q_p}}$ is non-scalar), it follows from the results of \cite{Fermat-Wiles} and \cite{hida-AJM-modules} that $C(\bfff)$ is generated by a nonzero element $\eta_{\bfff}\in\cR$.

\subsubsection{Triple products of Hida families}\label{subsubsec:triple-hida}

Let
\[
(\bff,\bfg,\bfh)\in S^o(N_f,\chi_f,\cR_f)\times S^o(N_g,\chi_g,\cR_g)\times S^o(N_h,\chi_h,\cR_h)
\]
be a triple of primitive Hida families with 
\begin{equation}\label{eq:a}
\textrm{$\chi_f\chi_g\chi_h=\omega^{2a}$ for some $a\in\Z$,}
\end{equation} 
where $\omega$ is the Teichm\"uller character. Put 
\[
\mathcal{R}=\cR_f\hat\otimes_{\mathscr{O}}\cR_g\hat\otimes_{\mathscr{O}}\cR_h,
\] 
which is a finite extension of the three-variable Iwasawa algebra $\Lambda\hat\otimes_{\mathscr{O}}\Lambda\hat\otimes_{\mathscr{O}}\Lambda$, 
and let
\[
\mathfrak{X}_{\mathcal{R}}^\bff:=\left\{(Q_1,Q_2,Q_3)\in\mathfrak{X}_{\cR_\varphi}^+\times\mathfrak{X}_{\cR_g}^{\rm cls}\times\mathfrak{X}_{\cR_h}^{\rm cls}\;:\;\textrm{$k_{Q_1}\geq k_{Q_2}+k_{Q_3}$ and $k_{Q_1}\equiv k_{Q_2}+k_{Q_3}\;({\rm mod}\;2)$}\right\}
\]
be the weight space for $\mathcal{R}$ in the so-called \emph{$\bff$-unbalanced range}. 

Let $\mathbf{V}=V_\bff\hat\otimes_{\mathscr{O}}V_{\bfg}\hat\otimes_{\mathscr{O}}V_{\bfh}$ be the triple tensor product Galois representation attached to $(\bff,\bfg,\bfh)$, and writing $\det\mathbf{V}=\mathcal{X}^2\varepsilon_{\rm cyc}$ (as is possible by \eqref{eq:a}) define  
\begin{equation}\label{eq:crit-twist}
\Vdag:=\mathbf{V}\otimes\mathcal{X}^{-1},
\end{equation}
which is a self-dual twist of $\mathbf{V}$. Define the rank four $G_{\Q_p}$-invariant subspace $\mathscr{F}_p^\bff(\Vdag)\subset\Vdag$ by
\begin{equation}\label{eq:unb-intro}
\mathscr{F}_p^\bff(\Vdag):=V_{\bff}^+\hat\otimes_{\cO}V_\bfg\hat\otimes_{\cO}V_{\bfh}\otimes\mathcal{X}^{-1}.
\end{equation}
For every $\underline{Q}=(Q_1,Q_2,Q_3)\in\mathfrak{X}_{\mathcal{R}}^\bff$ we denote by $\mathscr{F}_p^\bff(\VQdag)\subset\VQdag$  the corresponding specialisations. Finally, for every rational prime $\ell$ denote by $\varepsilon_\ell(\VQdag)$ the epsilon factor attached to the restriction of $\VQdag$ to $G_{\Q_\ell}$ as in \cite[p.\,21]{tate-background}, and assume that 
\begin{equation}\label{eq:+1}
\textrm{for some $\underline{Q}\in\mathfrak{X}_{\mathcal{R}}^\bff$, we have $\varepsilon_\ell(\VQdag)=+1$ for all prime factors $\ell$ of $N_f N_g N_h$.}
\end{equation}
As explained in \cite[\S{1.2}]{hsieh-triple}, condition (\ref{eq:+1}) is independent of $\underline{Q}$, and it implies that the sign in the functional equation for the triple product $L$-function 
\[
L(\VQdag,s) 
\]
(with center at $s=0$) is $+1$ for all $\underline{Q}\in\mathfrak{X}_{\mathcal{R}}^\bff$.

\begin{thm}\label{thm:hsieh-triple}
Let $(\bff,\bfg,\bfh)$ be a triple of primitive Hida families as above satisfying conditions (\ref{eq:a}) and (\ref{eq:+1}). Assume in addition that:
\begin{itemize}
\item $\gcd(N_f,N_g,N_h)$ is square-free,
\item the residual representation $\bar{\rho}_{\bff}$ is absolutely irreducible and $p$-distinguished,
\end{itemize}
and fix a generator $\eta_{\bff}$ of the congruence ideal of $\bff$. Then there exists a unique element 
\[
\mathscr{L}_p^{\bff}(\bff,\bfg,\bfh)\in\mathcal{R}
\]
such that for all $\underline{Q}=(Q_0,Q_1,Q_2)\in\mathfrak{X}_{\mathcal{R}}^{\bff}$ of weight $(k_0,k_1,k_2)$ with $\epsilon_{Q_0}=1$ we have
\[
(\mathscr{L}_p^\bff(\bff,\bfg,\bfh)(\underline{Q}))^2=\Gamma_{\VQdag}(0)\cdot\frac{L(\VQdag,0)}{(\sqrt{-1})^{2k_{0}}\cdot\Omega_{\bff_{Q_0}}^2}\cdot\mathcal{E}_p(\mathscr{F}_p^{\bff}(\VQdag))\cdot\prod_{\ell\in\Sigma_{\rm exc}}(1+\ell^{-1})^2,
\]
where:
\begin{itemize}
\item $\Gamma_{\VQdag}(0)=\Gamma_{\bC}(c_{\underline{Q}})\Gamma_\bC(c_{\underline{Q}}+2-k_1-k_2)\Gamma_\bC(c_{\underline{Q}}+1-k_1)\Gamma_\bC(c_{\underline{Q}}+1-k_2)$, with 
\[
c_{\underline{Q}}=(k_0+k_1+k_2-2)/2
\] 
and $\Gamma_\bC(s)=2(2\pi)^{-s}\Gamma(s)$;
\item $\Omega_{\bff_{Q_0}}$ is the canonical period
\[
\Omega_{\bff_{Q_0}}:=(-2\sqrt{-1})^{k_0+1}\cdot\frac{\Vert\bff_{Q_0}^\circ\Vert_{\Gamma_0(N_f)}^2}{\iota_p(\eta_{\bff_{Q_0}})}\cdot\Bigl(1-\frac{\chi_{f}'(p)p^{k_0-1}}{\alpha_{Q_0}^2}\Bigr)\Bigl(1-\frac{\chi_{f}'(p)p^{k_0-2}}{\alpha_{Q_0}^2}\Bigr),
\]
with $\bff_{Q_0}^\circ\in S_{k_0}(N_{f})$ the newform of conductor $N_f$ associated with $\bff_{Q_0}$, $\chi_f'$ the prime-to-$p$ part of $\chi_f$, and $\alpha_{Q_0}$ the specialization of $a_p(\bff)\in\cR_f^\times$ at $Q_0$;
\item $\mathcal{E}_p(\mathscr{F}_p^{\bff}(\VQdag))$ is the modified $p$-Euler factor
\[
\mathcal{E}_p(\mathscr{F}_p^{\bff}(\VQdag)):=\frac{L_p(\mathscr{F}_p^{\bff}(\VQdag),0)}{\varepsilon_p(\mathscr{F}_p^{\bff}(\VQdag))\cdot L_p(\VQdag/\mathscr{F}_p^{\bff}(\VQdag),0)}\cdot\frac{1}{L_p(\VQdag,0)},
\]
\end{itemize}
and $\Sigma_{\rm exc}$ is an explicitly defined subset of the prime factors of $N_f N_g N_h$, \cite[p.\,416]{hsieh-triple}.
\end{thm}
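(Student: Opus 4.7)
The plan is to construct $\mathscr{L}_p^{\bff}(\bff,\bfg,\bfh)$ as a $p$-adic interpolation of the \emph{square root} of the central triple product $L$-value, following the strategy of \cite{hsieh-triple}. The starting point is Ichino's triple product formula, which at each classical $\underline{Q}=(Q_0,Q_1,Q_2)\in\mathfrak{X}_{\mathcal{R}}^{\bff}$ expresses $L(\VQdag,0)$ as the square of a global trilinear period $I(\bff_{Q_0}^\circ,\bfg_{Q_1}^\circ,\bfh_{Q_2}^\circ)$, multiplied by the archimedean gamma factor $\Gamma_{\VQdag}(0)$, divided by the product of three Petersson norms, and corrected by an explicit product of local zeta integrals. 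In the $\bff$-unbalanced range $k_{Q_0}\geq k_{Q_1}+k_{Q_2}$, this trilinear period admits a Rankin--Selberg style expression
\[
\langle\,\bff_{Q_0}^\circ\,,\,\delta^{t}(\bfg_{Q_1}^\circ)\cdot\bfh_{Q_2}^\circ\,\rangle,
\]
where $\delta^{t}$ is an iterate of the Maass--Shimura raising operator adjusting the weights, and $\langle\,,\,\rangle$ is the Petersson pairing in weight $k_{Q_0}$ on $\Gamma_0(N_f)$.

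The next step is to $p$-adically interpolate this pairing. Using the $p$-adic avatar of $\delta^{t}$, namely Serre's $\theta$-operator acting on Hida families, one constructs an $\mathcal{R}$-adic nearly holomorphic form $\delta^{\bff}(\bfg)\cdot\bfh$ whose specialisation at each $\underline{Q}$ recovers, after $p$-depletion of $\bfg$ and $\bfh$, the classical form appearing above. Applying Hida's ordinary projector $e^{\rm ord}$ and then pairing with $\bff$ via the perfect duality on the ordinary part induced by the generator $\eta_{\bff}$ of the congruence ideal $C(\bff)$ --- whose existence relies on the hypothesis that $\bar\rho_{\bff}$ is absolutely irreducible and $p$-distinguished --- produces a canonical element of $\mathcal{R}$. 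This element is the candidate $\mathscr{L}_p^{\bff}(\bff,\bfg,\bfh)$, and uniqueness is automatic from the Zariski density of $\mathfrak{X}_{\mathcal{R}}^{\bff}$ inside $\mathrm{Spec}\,\mathcal{R}$ together with the fact that $\mathcal{R}$ is a domain.

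To verify the interpolation formula at $\underline{Q}$ with $\epsilon_{Q_0}=1$, one has to compute the local components of the resulting trilinear integral and match them against Ichino's formula. The archimedean place contributes the factor $\Gamma_{\VQdag}(0)/(\sqrt{-1})^{2k_0}$; the $\eta_{\bff}$-normalised pairing combined with Hida's formula for the congruence number produces $\Vert\bff_{Q_0}^\circ\Vert^2$ in the denominator via the canonical period $\Omega_{\bff_{Q_0}}$, together with the two parenthetical Euler correction factors at $p$; the discrepancy between the holomorphic Maass operator and its $p$-adic analogue, combined with the separation of the $\bff$-ordinary component, accounts for the modified $p$-Euler factor $\mathcal{E}_p(\mathscr{F}_p^{\bff}(\VQdag))$; and the local integrals at primes in $\Sigma_{\rm exc}$ produce the factors $(1+\ell^{-1})^2$. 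At all other ramified primes, the squarefree hypothesis on $\gcd(N_f,N_g,N_h)$ guarantees, through the local theta and Jacquet--Langlands correspondences, that the local integrals can be made to equal $1$ for a coherent choice of test vectors in the $\mathcal{R}$-adic family.

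The main obstacle is precisely this last point: choosing local test vectors at each prime dividing $N_f N_g N_h$ so that (i) they are coherent in $p$-adic families, and (ii) the resulting local integrals give exactly the predicted shape, contributing only the explicit factors $(1+\ell^{-1})^2$ at $\ell\in\Sigma_{\rm exc}$ and no extraneous terms elsewhere. This requires a careful case-by-case analysis of ramified local representations and their Whittaker or Kirillov models, and it is the content of the bulk of the technical work in \cite{hsieh-triple}; in a self-contained treatment it would be by far the longest and most delicate step, with the global and archimedean parts of the argument being considerably more routine.
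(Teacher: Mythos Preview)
Your outline is a faithful sketch of Hsieh's construction, but note that the paper itself does not reprove this result: its entire proof is a one-line citation to \cite[Thm.\,A]{hsieh-triple}. So rather than offering a different approach, you have supplied a (broadly accurate) summary of what lies behind that citation---Ichino's formula, the $p$-adic interpolation of the trilinear period via the $\theta$-operator and Hida's ordinary projector, the $\eta_{\bff}$-normalised pairing, and the local test-vector computations under the squarefree hypothesis. For the purposes of this paper, the correct ``proof'' is simply the reference.
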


\begin{proof}
This is \cite[Thm.\,A]{hsieh-triple}, which also includes an interpolations formula in the cases where $\epsilon_{Q_0}$ is not necessarily trivial. 
\end{proof}


\subsection{CM Hida families}


Let $K_\infty$ be the unique $\Z_p^2$-extension of $K$, and let $K_{\pp^\infty}$ be the maximal subfield of $K_\infty$ unramified outside $\pp$. Put 
\[
\Gamma_\infty:={\rm Gal}(K_\infty/K)\simeq\Z_p^2,\quad\quad
\Gamma_{\pp^\infty}:={\rm Gal}(K_{\pp^\infty}/K)\simeq\Z_p.
\]

For every ideal $\mathfrak{C}\subset\cO_K$ we let $K(\mathfrak{C})$ be the ray class field of $K$ of conductor $\mathfrak{C}$ (so in particular $K_{\pp^\infty}$ is the maximal $\Z_p$-extension of $K$ inside $K(\pp^\infty)$). Let ${\rm Art}_\pp$ be the restriction of the global Artin map to $K_\pp^\times$, with geometric normalisation. Identifying $\Z_p^\times$ and $\cO_{K_\pp}^\times$ via $\iota_p:\overline{\bQ}\hookrightarrow\overline{\bQ}_p$, 
the map ${\rm Art}_\pp$ induces an embedding $1+p\Z_p\rightarrow\Gamma_{\pp^\infty}$. Write $I_\pp^{\rm w}={\rm Art}_\pp(1+p\Z_p)\vert_{K_{\pp^\infty}}$ and let $b\geq 0$ be such that $[\Gamma_{\pp^\infty}:I_\pp^{\rm w}]=p^b$. (Note that $b=0$ if the class number of $K$ is coprime to $p$.) 

Fix a topological generator $\gamma_\pp\in\Gamma_{\pp^\infty}$ with $\gamma_\pp^{p^b}={\rm Art}_\pp(1+p)\vert_{K_{\pp^\infty}}$, and for each variable $S$ let $\Psi_S:\Gamma_\infty\rightarrow\mathscr{O}\dBr{S}^\times$ be the universal character 
given by
\[
\Psi_S(\sigma)=(1+S)^{l(\sigma)},
\]
where $l(\sigma)\in\Z_p$ is such that $\sigma\vert_{K_{\pp^\infty}}=\gamma_\pp^{l(\sigma)}$. Let $\mathbf{v}\in\mathscr{O}$ be such that $\mathbf{v}^{p^b}=1+p$ (after enlarging $\mathscr{O}$ if necessary). For any finite order character $\psi:G_K\rightarrow\mathscr{O}^\times$ of  conductor $\mathfrak{C}$ put
\[
\boldsymbol{\theta}_\psi(S)(q)=\sum_{(\fa,\pp\mathfrak{C})=1}\psi(\sigma_{\fa})\Psi^{-1}_{\mathbf{v}^{-1}(1+S)-1}(\sigma_\fa)q^{\mathbf{N}(\fa)}\in\mathscr{O}\dBr{S}\dBr{q},
\]
where $\sigma_\fa\in{\rm Gal}(K(\mathfrak{C}\pp^\infty)/K)$ is the Artin symbol of $\fa$. 
Then $\boldsymbol{\theta}_\psi(S)$ is a primitive Hida family defined over $\mathscr{O}\dBr{S}$ of tame level $D_K\mathbf{N}(\mathfrak{C})$ and tame character $(\psi\circ\mathscr{V})\eta_{K/\Q}\omega^{-1}$, where 
\[
\mathscr{V}:G_\Q^{\rm ab}\rightarrow G_K^{\rm ab}
\] 
is the transfer map and $\eta_{K/\Q}$ is the quadratic character associated to $K/\Q$.

\subsection{Anticyclotomic $p$-adic $L$-functions}\label{subsec:anticyc-Lp}

Let $f\in S_2(\Gamma_0(pN_f))$, with $p\nmid N_f$, be a $p$-ordinary $p$-stabilised newform of tame level $N_f$ defined over $\mathscr{O}$. Assume that $f$ is the ordinary $p$-stabilisation of the newform $f^\circ\in S_2(\Gamma_0(N_f))$, and let $\alpha_p\in\mathscr{O}^\times$ be the $U_p$-eigenvalue of $f$. Write 
\begin{equation}\label{eq:N-pm}
N_f=N^+ N^-
\end{equation}
with $N^+$ (resp. $N^-$) divisible only by primes which are split (resp. inert) in $K$, and fix an ideal $\mathfrak{N}^+\subset\cO_K$ with $\cO_K/\mathfrak{N}^+\iso\bZ/N^+\bZ$. 


Let $\Gamma^-$ be the Galois group of the anticyclotomic $\bZ_p$-extension $K_\infty^-/K$. By definition, the map $\sigma\mapsto l(\sigma^{1-\cc}\vert_{K_{\pp^\infty}})$ factor through $\Gamma^-$, and we let $\gamma^-$ be the topological generator of $\Gamma^-$ mapping to $1$ under the resulting isomorphism $\Gamma^-\simeq\bZ_p$. We then identity $\mathscr{O}\dBr{\Gamma^-}$ with the power series ring $\mathscr{O}\dBr{T}$ via $\gamma^-\mapsto 1+T$. 

\begin{thm}\label{thm:BD-theta}
Let $\chi$ be a ring class character of $K$ of conductor $c\cO_K$ with values in $\mathscr{O}$, and
assume that: 
\begin{itemize}
	\item[(i)] {} $(pN_f,cD_K)=1$,
    \item[(ii)] {} $N^-$ is the squarefree product of an odd number of primes,
    \item[(iii)] if $q\mid N^-$ is a prime with $q\equiv 1\pmod{p}$, then $\bar{\rho}_f$ is ramified at $q$.
\end{itemize}
Then there exists a unique $\Theta_{f/K,\chi}(T)\in\mathscr{O}\dBr{T}$ such that for every $p$-power root of unity $\zeta$:
\[
\Theta^{}_{f/K,\chi}(\zeta-1)^2=\frac{p^n}{\alpha_p^{2n}}\cdot\mathcal{E}_p(f,\chi,\zeta)^{2}\cdot\frac{L(f^\circ/K\otimes\chi\epsilon_\zeta,1)}{(2\pi)^2\cdot \Omega_{f^\circ,N^-}}\cdot u_K^2\sqrt{D_K}\chi\epsilon_\zeta(\sigma_{\mathfrak{N}^+})\cdot\eps_p,
\]	
where:
\begin{itemize}
\item $n\geq 0$ is such that $\zeta$ has exact order $p^n$, 
\item $\epsilon_\zeta:\Gamma_\infty^-\rightarrow\mu_{p^\infty}$ be the character defined by $\epsilon_\zeta(\gamma^-)=\zeta$,
\item 
$\mathcal{E}_p(f,\chi,\zeta)=
\begin{cases}
(1-\alpha_p^{-1}\chi(\pp))(1-\alpha_p\chi(\overline{\pp}))&
\textrm{if $n=0$,}\\[0.2em] 
1&\textrm{if $n>0$,}
\end{cases}$
\item  $\Omega_{f^\circ,N^-}=4\Vert f^\circ\Vert_{\Gamma_0(N_f)}^2\cdot\eta_{f,N^-}^{-1}$ is the \emph{Gross period} of $f^\circ$ (see \cite[p.~524]{hsieh-triple}),
\item $u_K=\vert\cO_K^\times\vert/2$,
\item $\eps_p\in\{\pm 1\}$ is the local root number of $f^\circ$ at $p$. 
\end{itemize}
\end{thm}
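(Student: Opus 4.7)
My plan is to construct $\Theta_{f/K,\chi}(T)$ as a theta element attached to a quaternionic Jacquet--Langlands transfer of $f^\circ$, following the Bertolini--Darmon method in the form refined by Chida--Hsieh.

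First, hypothesis (ii) ensures the existence of a definite quaternion algebra $B/\bQ$ of discriminant $N^-$; combined with hypothesis (i), this allows a choice of Eichler order $R\subset B$ of level $N^+$ together with an optimal embedding $\cO_K\hookrightarrow R$ compatible with the fixed factorisation $\mathfrak{N}^+\overline{\mathfrak{N}}^+=N^+\cO_K$. Via Jacquet--Langlands, $f^\circ$ transfers to a nonzero function $\phi_f$ on the finite Shimura set $X=B^\times\backslash\widehat{B}^\times/\widehat{R}^\times$; hypothesis (iii), together with the $p$-distinguishedness implicit in $\alpha_p\in\mathscr{O}^\times$, lets one normalise $\phi_f$ so that it takes values in $\mathscr{O}$ and generates the $f$-isotypic component, using an Ihara-type argument \`a la Diamond--Taylor / Pollack--Weston.

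Next, the optimal embedding produces a compatible system of Gross CM points $P_n\in X$ attached to the orders $\cO_{cp^n}\subset\cO_K$, with a natural action of $\Pic(\cO_{cp^n})$ factoring through $\Gamma_n^-:=\Gal(K_n^-/K)$. I would then define
\[
\theta_n(\chi):=\alpha_p^{-n}\sum_{\sigma\in\Gamma_n^-}\chi(\sigma)\,\phi_f(P_n^\sigma)\,[\sigma]\in\mathscr{O}[\Gamma_n^-],
\]
and verify via the relation $U_p\phi_f=\alpha_p\phi_f$, together with the trace relation for CM points along the $p$-tower (the factor $\alpha_p^{-n}$ is inserted precisely for this purpose), that $\{\theta_n(\chi)\}_n$ is compatible under the canonical norm maps. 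This yields the desired $\Theta_{f/K,\chi}\in\mathscr{O}\dBr{\Gamma^-}\iso\mathscr{O}\dBr{T}$ via $\gamma^-\mapsto 1+T$.

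The interpolation formula then follows from the Waldspurger/Gross special value formula, which expresses the square of the toric period of $\phi_f$ against $\chi\epsilon_\zeta$ as $L(f^\circ/K\otimes\chi\epsilon_\zeta,1)$ divided by an automorphic inner product of $\phi_f$. Using Jacquet--Langlands to rewrite this inner product in terms of $\Vert f^\circ\Vert^2_{\Gamma_0(N_f)}$ introduces the congruence number $\eta_{f,N^-}$, producing precisely the Gross period $\Omega_{f^\circ,N^-}$; the local factor at $p$ separates into the modified Euler factor $\mathcal{E}_p(f,\chi,\zeta)$, which is nontrivial only when $\zeta=1$ and $\chi\epsilon_\zeta$ is unramified at $p$. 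The main obstacle is the precise bookkeeping of the normalising constants --- the appearance of $u_K^2\sqrt{D_K}$, the term $\chi\epsilon_\zeta(\sigma_{\mathfrak{N}^+})$ from the choice of embedding, and the local root number $\eps_p$ at $p$ --- which requires a delicate comparison of automorphic and arithmetic normalisations. This is precisely the content of Hsieh's refined computation in \cite{hsieh-triple}, which one would invoke directly under hypotheses (i)--(iii).
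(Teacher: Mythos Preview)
Your proposal is correct and follows exactly the approach the paper invokes: the paper's own proof simply cites \cite{BDmumford-tate} for the construction and \cite[Thm.~A]{ChHs1} for the interpolation formula, and your sketch is a faithful summary of that Bertolini--Darmon/Chida--Hsieh construction via theta elements on the definite quaternion algebra of discriminant $N^-$. The only minor correction is that the precise interpolation formula with the stated constants is established in \cite{ChHs1} rather than \cite{hsieh-triple} (the latter is cited only for the definition of the Gross period).
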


\begin{proof}
See \cite{BDmumford-tate} for the original construction, and  
\cite[Thm.~A]{ChHs1} for the stated interpolation property. 
\end{proof}

When $\chi$ is the trivial character, we write $\Theta_{f/K,\chi}(T)$ simply as $\Theta_{f/K}(T)$.

\subsection{Factorisation of triple product $p$-adic $L$-functions}\label{subsec:factor-L}

Let $f\in S_2(pN_f)$ be a $p$-stabilised newform as in  the preceding section. By Hida theory, $f$ is the specialisation of a unique primitive Hida family $\bff\in S^o(N_f,\cR)$ 
at an arithmetic point $Q_0\in\mathfrak{X}_\cR^+$ of weight $2$.  
Let $\ell\nmid pN_f$ be a prime split in $K$, and let $\chi$ be a ring class character of $K$ of conductor $\ell^m\cO_K$ for some  $m>0$. Denoting by $\cc$ the non-trivial automorphism of $K/\bQ$, write $\chi=\psi/\psi^\cc$ with $\psi$ a ray class character modulo $\ell^m\cO_K$ with $\psi^\cc(\sigma)=\psi(\cc\sigma\cc^{-1})$. Let
\begin{equation}\label{eq:gg*}
\bfg=\boldsymbol{\theta}_{\psi}(S_1)\in\mathscr{O}\dBr{S_1}\dBr{q}, 
\quad
\bfg^*=\boldsymbol{\theta}_{\psi^{-1}}(S_2)\in\mathscr{O}\dBr{S_2}\dBr{q}
\end{equation}
be the primitive CM Hida families (of tame level $C=D_K\ell^{2m}$)  attached to $\psi$ and $\psi^{-1}$, respectively.

The triple $(\bff,\bfg,\bfg^*)$ satisfies conditions (\ref{eq:a}) and (\ref{eq:+1}) and the associated triple product $p$-adic $L$-function $\mathscr{L}_p^{\bff}(\bff,\bfg,\bfg^*)$ of Theorem~\ref{thm:hsieh-triple} is an element in $\mathcal{R}=\cR\hat\otimes_{\mathscr{O}}\mathscr{O}\dBr{S_1}\hat\otimes_{\mathscr{O}}\mathscr{O}\dBr{S_2}\simeq\cR\dBr{S_1,S_2}$. Put $S=S_1$. In the following, we let 
\begin{equation}\label{eq:Lp-1}
\mathscr{L}_p^\bff(f,\ggstar)\in\mathscr{O}\dBr{S}
\end{equation}
be the image of $\mathscr{L}_p^\unb(\bff,\bfg,\bfg^*)$ under the map $\cR\dBr{S_1,S_2}\rightarrow\mathscr{O}\dBr{S_1,S_2}$ given by the specialisation $Q_0:\cR\rightarrow\mathscr{O}$ composed with the quotient  $\mathscr{O}\dBr{S_1,S_2}\rightarrow\mathscr{O}\dBr{S_1,S_2}/(S_1-S_2)$. 

Let $K(\chi)$ be the field obtained by adjoining to $K$ the values of $\chi$, and put $K(\chi,\alpha_p)=K(\chi)(\alpha_p)$.

\begin{prop}\label{prop:factor-L}
Assume that: 
\begin{itemize}
\item[(i)] $N^-$ is the squarefree product of an odd number of primes, 
\item[(ii)] if $q\mid N^-$ is a prime with $q\equiv 1\pmod{p}$, then $\bar{\rho}_f$ is ramified at $q$. 
\end{itemize}
Set $T=\mathbf{v}^{-1}(1+S)-1$. Then 
\[
\mathscr{L}_p^\unb(f,\ggstar)(S)=
\pm\mathbf{w}^{-1}\cdot\Theta_{f/K}(T)\cdot C_{f,\chi}\cdot \sqrt{L^{\rm alg}(f/K\otimes\chi,1)}\cdot\frac{\eta_{f^\circ}}{\eta_{f^\circ,N^-}},
\]
where $\mathbf{w}$ is a unit in $\mathscr{O}\dBr{T}$, $C_{f,\chi}\in K(\chi,\al_p)^\x$, and  
\[
L^{\rm alg}(f/K\otimes\chi,1):=\frac{L(f/K\otimes\chi,1)}{4\pi^2\lVert f^\circ\rVert_{\Gamma_0(N_{f})}^2}\in K(\chi).
\] 
\end{prop}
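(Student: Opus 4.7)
The plan is to deduce the factorisation by \emph{Artin formalism} for $p$-adic $L$-functions: both sides lie in $\mathscr{O}\dBr{T}$, and it suffices to compare (after squaring) the interpolation formulas of Theorems~\ref{thm:hsieh-triple} and \ref{thm:BD-theta} on a Zariski-dense set of arithmetic specialisations.

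First I would analyse the Galois representation on the diagonal $S_1=S_2=S$. At a diagonal arithmetic point $\underline{Q}=(Q_0,Q,Q)\in\mathfrak{X}_{\mathcal{R}}^{\bff}$, the families $\bfg=\boldsymbol{\theta}_\psi$ and $\bfg^{*}=\boldsymbol{\theta}_{\psi^{-1}}$ specialise to theta series attached to Hecke characters $\phi_Q,\phi_Q^{*}$ of $K$; by Mackey's formula,
\[
\mathrm{Ind}_K^{\bQ}(\phi_Q)\otimes\mathrm{Ind}_K^{\bQ}(\phi_Q^{*})\;\simeq\;\mathrm{Ind}_K^{\bQ}(\phi_Q\phi_Q^{*})\oplus\mathrm{Ind}_K^{\bQ}(\phi_Q(\phi_Q^{*})^c).
\]
After the self-dual twist defining $V_Q^{\dagger}$, the character $\phi_Q\phi_Q^{*}$ becomes an anticyclotomic finite-order character $\lambda_Q$ depending on $Q$, while $\phi_Q(\phi_Q^{*})^c$ becomes $\chi\cdot\nu_Q$ for another anticyclotomic finite-order character $\nu_Q$, and Artin formalism yields
\[
L(V_Q^{\dagger},0)\;=\;L(f/K\otimes\lambda_Q,1)\cdot L(f/K\otimes\chi\nu_Q,1).
\]
By Theorem~\ref{thm:BD-theta}, the first factor is interpolated, squared, by $\Theta_{f/K}(T)^2$ under the identification $T=\mathbf{v}^{-1}(1+S)-1$; by Shimura's algebraicity, the second factor is a $K(\chi,\al_p)$-rational multiple of the fixed central value $L^{\rm alg}(f/K\otimes\chi,1)$.

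Next I would match the remaining interpolation data. The canonical period $\Omega_{\bff_{Q_0}}$ of Theorem~\ref{thm:hsieh-triple} and the Gross period $\Omega_{f^\circ,N^-}$ of Theorem~\ref{thm:BD-theta} differ by the congruence-ideal ratio $\eta_{f^\circ}/\eta_{f^\circ,N^-}$, accounting for that factor; the $\Gamma$-factor $\Gamma_{V_Q^{\dagger}}(0)$ at weight $(2,1,1)$ collapses to $(2\pi)^2$ times an explicit rational constant; the modified $p$-Euler factor $\mathcal{E}_p(\mathscr{F}_p^{\bff}(V_Q^{\dagger}))$ factors as $\mathcal{E}_p(f,\chi,\zeta)^2$ times a contribution involving $\chi(\pp),\chi(\ppbar),\al_p$ that is absorbed into $C_{f,\chi}$; and the residual terms (the excluded-primes product, the factors $u_K^2\sqrt{D_K}\,\chi\epsilon_\zeta(\sigma_{\mathfrak{N}^+})\eps_p$, and Gauss-sum contributions from the transfer map $\mathscr{V}$) are absorbed into the unit $\mathbf{w}\in\mathscr{O}\dBr{T}^\times$.

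The main obstacle will be the extraction of the square root. The previous steps give
\[
\mathscr{L}_p^{\bff}(f,\ggstar)(S)^2\;=\;\mathbf{w}^{-2}\,\Theta_{f/K}(T)^2\cdot C_{f,\chi}^2\cdot L^{\rm alg}(f/K\otimes\chi,1)\cdot\Bigl(\tfrac{\eta_{f^\circ}}{\eta_{f^\circ,N^-}}\Bigr)^2
\]
in the UFD $\mathscr{O}\dBr{T}$; choosing $C_{f,\chi}\in K(\chi,\al_p)^\times$ so that $\sqrt{L^{\rm alg}(f/K\otimes\chi,1)}$ is a well-defined element of $K(\chi,\al_p)$ and taking a square root yields the claimed factorisation, the $\pm$ in the statement being the resulting sign ambiguity.
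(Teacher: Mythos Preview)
Your strategy---factor the complex $L$-value via Artin formalism and compare squared interpolation formulas on a Zariski-dense set---is exactly what underlies the paper's proof (which defers to \cite[Prop.~8.1]{hsieh-triple} for this computation, together with the interpolation of $\Theta_{f/K,\chi}$ at $T=0$).

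There is, however, a genuine gap in your treatment of the second $L$-factor. You write that $\phi_Q(\phi_Q^{*})^c$ becomes $\chi\cdot\nu_Q$ for an anticyclotomic character $\nu_Q$ varying with $Q$, and then appeal to ``Shimura's algebraicity'' to conclude that $L(f/K\otimes\chi\nu_Q,1)$ is a $K(\chi,\alpha_p)$-rational multiple of the fixed value $L^{\rm alg}(f/K\otimes\chi,1)$. Shimura's theorem says no such thing: it tells you each individual value is algebraic after dividing by a period, not that values at different twists are rationally proportional. If $\nu_Q$ genuinely varied, your argument would collapse here, since the right-hand side would not be constant in $Q$.

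The point you are missing is purely algebraic: in the decomposition \eqref{eq:dec-V} one has $W_2=(1+S_1)^{1/2}(1+S_2)^{-1/2}-1$, which vanishes identically on the diagonal $S_1=S_2$. Hence $\nu_Q$ is \emph{trivial} for every $Q$ on the diagonal, and the second factor is literally the constant $L(f/K\otimes\chi,1)$. This is precisely why the paper invokes the interpolation property of $\Theta_{f/K,\chi}$ at the single point $T=0$ rather than along a family. Once you make this correction, your comparison of periods, Euler factors, and the square-root extraction in the UFD $\mathscr{O}\dBr{T}$ goes through as you outlined.
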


\begin{proof} 	
This is immediate from \cite[Prop.~8.1]{hsieh-triple} and the interpolation property 
of $\Theta_{f/K,\chi}(0)$. 
\end{proof}





\section{Selmer group decompositions}

In this section we introduce two different Selmer groups associated to triple products of modular forms, and relate them 
to anticyclotomic Selmer groups attached to a single modular form.

\subsection{Selmer groups for triple products of modular form}

Let $(\bff,\bfg,\bfh)$ be a triple of primitive Hida families as in $\S\ref{subsubsec:triple-hida}$ satisfying (\ref{eq:a}), and let $\Vdag=\mathbf{V}\otimes\mathcal{X}^{-1}$ be the self-dual twist of the associated 
big  Galois representation.

\begin{defn}\label{def:local-p}
Put
\[
\mathscr{F}^{\rm bal}_p(\Vdag)=\mathscr{F}_p^2(\Vdag)
:=\bigl(V_{\bff}^+\otimes V_{\bfg}^+\otimes V_{\bfh}+V_{\bff}^+\otimes V_{\bfg}\otimes V_{\bfh}^++V_{\bff}\otimes V_{\bfg}^+\otimes V_{\bfh}^+\bigr)\otimes\mathcal{X}^{-1},
\]
and define the \emph{balanced local condition} $\rH^1_{\rm bal}(\Q_p,\Vdag)$ 
by 
\[
\rH^1_{\rm bal}(\Q_p,\Vdag):={\rm im}\bigl\{\rH^1(\Q_p,\mathscr{F}_p^{\rm bal}(\Vdag))\rightarrow\rH^1(\Q_p,\Vdag)\bigr\}.
\]
Similarly, put 
\[
\mathscr{F}_p^{\unb}(\Vdag):=\bigl(V_{\bff}^+\otimes V_{\bfg}\otimes V_{\bfh}\bigr)\otimes\mathcal{X}^{-1},
\] 
and define the \emph{$\bff$-unbalanced local condition} $\rH^1_{\unb}(\Q_p,\Vdag)$ 
by
\[
\rH^1_{\bff}(\Q_p,\Vdag):={\rm im}\bigl\{\rH^1(\Q_p,\mathscr{F}_p^{\bff}(\Vdag))\rightarrow\rH^1(\Q_p,\Vdag)\bigr\}.
\]
\end{defn}

It is easy to see that the maps appearing in these definitions are injective, and in the following we shall use this to identify $\rH^1_{\any}(\Q_p,\Vdag)$ with $\rH^1(\Q_p,\mathscr{F}_p^\any(\Vdag))$ for $\any\in\{{\rm bal},\unb\}$. Fix a finite set of primes $S$ containing $\infty$ and the primes dividing $N_fN_gN_g$, and let $G_{\bQ,S}$ be the Galois group of the maximal extension of $\Q$ unramified outside $S$.

\begin{defn}\label{def:Sel-bu}
Let $\any\in\{{\rm bal},\unb\}$, and define the Selmer group ${\rm Sel}^\any(\Q,\Vdag)$ by
\[
{\rm Sel}^\any(\Q,\Vdag):=\ker\biggl\{\rH^1(G_{\Q,S},\Vdag)\rightarrow\frac{\rH^1(\Q_p,\Vdag)}{\rH^1_\any(\Q_p,\Vdag)}\biggr\}
\]
We call ${\rm Sel}^{\rm bal}(\Q,\Vdag)$ (resp. ${\rm Sel}^{\bff}(\Q,\Vdag)$) the \emph{balanced} (resp. \emph{$\bff$-unbalanced}) Selmer group.
\end{defn}

Let $\Adag={\rm Hom}_{\Z_p}(\Vdag,\mu_{p^\infty})$ be the arithmetic dual of $\Vdag$, and for $\any\in\{{\rm bal},\bff\}$ define $\rH^1_{\any}(\Q_p,\Adag)\subset\rH^1(\Q_p,\Adag)$ to be the orthogonal complement of $\rH^1_\any(\Q_p,\Vdag)$ under the local Tate duality
\[
\rH^1(\Q_p,\Vdag)\times\rH^1(\Q_p,\Adag)\rightarrow\Q_p/\Z_p.
\]
Similarly as above, we then define the balanced and $\bff$-unbalanced Selmer groups with coefficients in $\Adag$ by
\[
{\rm Sel}^\any(\Q,\Adag):=\ker\biggl\{\rH^1(G_{\Q,S},\Adag)\rightarrow\frac{\rH^1(\Q_p,\Adag)}{\rH_\any^1(\Q_p,\Adag)}\times\prod_{\ell\in S\smallsetminus\{p\}}\rH^1(\Q_\ell,\Adag)\biggr\},
\]
and let $X^\any(\Q,\Adag)={\rm Hom}_{\Z_p}({\rm Sel}^\any(\Q,\Adag),\Q_p/\Z_p)$ denote the Pontryagin dual of ${\rm Sel}^\any(\Q,\Adag)$.

\subsection{Anticyclotomic Selmer groups for modular forms}\label{subsec:Sel-f}

Let $f\in S_2(\Gamma_0(pN_f))$ be an ordinary $p$-stabilised newform and $K/\bQ$ an imaginary quadratic field as in $\S\ref{subsec:anticyc-Lp}$. 

Let $V_f$ be the $p$-adic Galois representation associated to $f$. We adopt the convention that if $f$ corresponds to the isogeny class of an elliptic curve $E/\bQ$, then $V_f\simeq V_pE$ (rather than its dual). 
By $p$-ordinarity, restricted to $G_{\Q_p}$ the representation $V_f$ fits into a short exact sequence
\[
0\rightarrow V_f^+\rightarrow V_f\rightarrow V_f^-\rightarrow 0
\]
with $V_f^\pm$ both $1$-dimensional, and with the $G_{\Q_p}$-action on $V_f^-$ given by the unramified character sending arithmetic Frobenius to $\alpha_p\in\mathscr{O}^\times$, the $U_p$-eigenvalue of $f$.

Below we fix $\Sigma$ to be any finite set of places of $K$ containing $\infty$ and the prime dividing $pN_f$, and for any field extension $L/K$ let $G_{L,\Sigma}$ the Galois group of the maximal extension of $L$ unramified outside $\Sigma$.

\begin{defn}\label{def:Sel-f}
Let $L$ be a finite field extension of $K$, and let $\mathscr{F}=\{\mathscr{F}_v(V_f)\}_{v\mid p}$ be a collection a $G_{K_v}$-stable subpaces $\mathscr{F}_v(V_f)\subset V_f$ for $v\mid p$. We define the  \emph{Greenberg Selmer group} ${\rm Sel}_{\mathscr{F}}(L,V_f)$ by
\[
{\rm Sel}_{\mathscr{F}}(L,V_f):={\rm ker}\biggl\{\rH^1(G_{L,\Sigma},V_f)\rightarrow\prod_{w}\frac{\rH^1(L_w,V_f)}{\rH^1_{\mathscr{F}}(L_w,V_f)}\biggr\},
\]
where $w$ runs over the finite primes of $L$ lying above a prime $v\in\Sigma$, and 
\[
\rH^1_{\mathscr{F}}(L_w,V_f):=\begin{cases}
\ker\{\rH^1(L_w,V_f)\rightarrow\rH^1(L_w^{\rm ur},V_f)\}&\textrm{if $w\nmid p$,}\\[0.2em]
{\rm im}\{\rH^1(L_w,\mathscr{F}_v(V_f))\rightarrow\rH^1(L_w,V_f)\}&\textrm{if $w\mid v\mid p$}.
\end{cases}
\]
\end{defn}

We shall be particularly interested in the following choices of $\mathscr{F}$:
\begin{itemize}
\item The \emph{relaxed-strict} Selmer group ${\rm Sel}_{\emptyset,0}(L,V_f)$ obtained by taking
\[
\mathscr{F}_v(V_f)=\begin{cases}
V_f&\textrm{if $v=\pp$,}\\[0.2em]
0&\textrm{if $v=\ppbar$.}
\end{cases}
\]
\item The \emph{ordinary} Selmer group ${\rm Sel}(L,V_f)$ obtained by taking
$\mathscr{F}_v(V_f)=V_f^+$ for all $v\mid p$.
\end{itemize}

For a $G_K$-stable lattice $T_f\subset V_f$, we let $\rH^1_{\mathscr{F}}(L_w,T_f)$ be the inverse image of $\rH^1_{\mathscr{F}}(L_w,V_f)$ under the natural map $\rH^1(L_w,T_f)\rightarrow\rH^1(L_w,V_f)$, and define ${\rm Sel}_{\mathscr{F}}(L,T_f)$ by the same recipe as above. Then, for $A_f:={\rm Hom}_{\Z_p}(T_f,\mu_{p^\infty})$, we define the \emph{dual Selmer group} ${\rm Sel}_{\mathscr{F}^*}(L,A_f)$ by
\[
{\rm Sel}_{\mathscr{F}^*}(L,A_f):=\ker\biggl\{\rH^1(G_{L,\Sigma},A_f)\rightarrow\prod_w\frac{\rH^1(L_w,A_f)}{\rH^1_{\mathscr{F}^*}(L_w,A_f)}\biggr\}
\]
where $\rH^1_{\mathscr{F}^*}(L_w,A_f)$ is the orthogonal complement of $\rH^1_{\mathscr{F}}(L_w,T_f)$ under local Tate duality
\[
\rH^1(L_w,T_f)\times\rH^1(L_w,A_f)\rightarrow\bQ_p/\bZ_p.
\]

Write $K_m^-$ for the subextension of the the anticyclotomic $\Z_p$-extension $K_\infty^-$ with $[K_m^-:K]=p^m$, and put
\[
{\rm Sel}_{\mathscr{F}}(K_\infty^-,T_f):=\varprojlim_m{\rm Sel}_{\mathscr{F}}(K_m^-,T_f),\quad
{\rm Sel}_{\mathscr{F}}(K_\infty^-,A_f):=\varinjlim_m{\rm Sel}_{\mathscr{F}}(K_m^-,A_f),
\] 
where the limits are with respect to the corestriction and restriction maps, respectively. Let 
$\Psi:G_K\rightarrow\mathscr{O}\dBr{\Gamma^-}^\times$ be the character arising from the projection $G_K\rightarrow\Gamma^-$. Writing $T_f\otimes\Psi^{-1}$ (resp. $A_f\otimes\Psi$) for the module $T_f\otimes_{\mathscr{O}}\mathscr{O}\dBr{\Gamma^-}$ (resp. $A_f\otimes_{\mathscr{O}}\mathscr{O}\dBr{\Gamma^-}$) with $G_K$-action on the second factor given by $\Psi^{-1}$ (resp. $\Psi$), we then have natural $\mathscr{O}\dBr{\Gamma^-}$-module pseudo-isomorphisms
\begin{equation}\label{eq:Sh-Sel}
{\rm Sel}_{\mathscr{F}}(K_\infty^-,T_f)\sim{\rm Sel}_{\mathscr{F}}(K,T_f\otimes\Psi^{-1}),\quad
{\rm Sel}_{\mathscr{F}}(K_\infty^-,A_f)\sim{\rm Sel}_{\mathscr{F}}(K,A_f\otimes\Psi),
\end{equation}
where the Selmer groups on the right-hand side are defined is the same way as in Definition~\ref{def:Sel-f}, with $\mathscr{F}_v(T_f\otimes\Psi^{-1}):=(\mathscr{F}_v(V_f)\cap T_f)\otimes\Psi^{-1}$ and $\mathscr{F}_v(A_f\otimes\Psi):={\rm Hom}_{\Z_p}(T_f/(\mathscr{F}_v(V_f)\cap T_f),\mu_{p^\infty})\otimes\Psi$.

\subsection{Decomposition of triple product Selmer groups}\label{subsec:dec-S}

Suppose now that $\bff$ is the Hida  family passing through the $p$-stabilised newform $f\in S_2(\Gamma_0(pN_f))$, and $(\bfg,\bfg^*)=(\boldsymbol{\theta}_\psi(S_1),\boldsymbol{\theta}_{\psi^{-1}}(S_2))$ are CM Hida families as in (\ref{eq:gg*}). 

For any primitive Hida family $\boldsymbol{\phi}$, we now let $V_{\boldsymbol{\phi}}$ be the realization of $\rho_{\boldsymbol{\phi}}$ arising in the $p$-adic \'{e}tale cohomology of the $p$-tower of modular curves as in Ohta's works \cite{ohta-etI,ohta-etII}, following the conventions in \cite[\S{7.2}]{KLZ2} (except that a ``Hida family'' for us is a ``branch''  
in their sense). 

In the case of $\bfg$ (and similarly $\bfg^*$), when $\chi:=\psi/\psi^\cc$ satisfies 
\begin{equation}\label{eq:chi-dist}
\bar{\chi}\vert_{G_{K_\pp}}\neq 1,
\end{equation}
where $G_{K_\pp}\subset G_K$ is a decomposition group at $\pp$, it follows from a slight extension of the isomorphism in \cite[Cor.\,5.2.5]{LLZ-K} (see \cite[$\S${3.2.3}]{BL-ord}) that
\[
V_{\bfg}\simeq{\rm Ind}_K^\Q(\psi^{-1}\Psi_{T_1}),
\]
where $T_1=\mathbf{v}^{-1}(1+S_1)-1$. Suppose $Q_0:\cR\rightarrow\mathscr{O}$ is such that  $f$ is the specialisation of $\bff$ at $Q_0$, and write $\Vsdag$ for the resulting specialisation of $\Vdag$. Similarly setting $T_2=\mathbf{v}^{-1}(1+S_2)-1$ and noting that $(\det V_\bfg)(\det V_{\bfg^*})=\Psi_{T_1}\Psi_{T_2}\circ\mathscr{V}$, we then have
\begin{equation}\label{eq:dec-V}
\begin{aligned}
\Vsdag&\simeq T_f\otimes{\rm Ind}_K^\Q(\psi^{-1}\Psi_{T_1})\otimes{\rm Ind}_K^\Q(\psi\Psi_{T_2})\otimes(\Psi_{T_1}^{1/2}\Psi_{T_2}^{1/2}\circ\mathscr{V})^{-1}\\
&\simeq\bigl(T_f\otimes{\rm Ind}_{K}^\Q\Psi_{W_1}^{1-\cc}\bigr)\oplus\bigl(T_f\otimes{\rm Ind}_K^\Q\chi^{-1}\Psi_{W_2}^{1-\cc}\bigr),
\end{aligned}
\end{equation}
where $T_f$ is the specialisation of $V_{\bff}$ at $Q_0$ and
\[
W_1=\mathbf{v}^{-1}(1+S_1)^{1/2}(1+S_2)^{1/2}-1,\quad W_2=(1+S_1)^{1/2}(1+S_2)^{-1/2}-1,
\] 
In particular, together with Shapiro's lemma this gives
\begin{equation}\label{eq:shapiro}
\rH^1(\Q,\Vsdag)\simeq\rH^1(K,T_f\otimes\Psi_{W_1}^{1-\cc})\oplus\rH^1(K,T_f\otimes\chi^{-1}\Psi_{W_2}^{1-\cc})
\end{equation}

\begin{prop}\label{prop:factor-S}
Suppose $\psi$ is a ray class character of $K$ such that $\chi=\psi/\psi^\cc$ satisfies \eqref{eq:chi-dist}. Then under the isomorphism (\ref{eq:shapiro}), the balanced Selmer group 
decomposes as
\begin{align*}
{\rm Sel}^{\rm bal}(\Q,\Vsdag)&\simeq{\rm Sel}_{\emptyset,0}(K,T_f\otimes\Psi_{W_1}^{1-\cc})\oplus{\rm Sel}(K,T_f\otimes\chi^{-1}\Psi_{W_2}^{1-\cc}),
\end{align*}
and the $\unb$-unbalanced Selmer group 
decomposes as
\begin{align*}
{\rm Sel}^{\unb}(\Q,\Vsdag)&\simeq
{\rm Sel}(K,T_f\otimes\Psi_{W_1}^{1-\cc})\oplus{\rm Sel}(K,T_f\otimes\chi^{-1}\Psi_{W_2}^{1-\cc}).\end{align*}
\end{prop}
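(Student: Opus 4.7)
The strategy is to leverage Shapiro's lemma both globally and locally, and then identify how the filtrations defining the balanced and $\bff$-unbalanced local conditions at $p$ split under the Galois-theoretic decomposition \eqref{eq:dec-V}. Write $\Vsdag = U_1 \oplus U_2$ with $U_1 = T_f \otimes \Ind_K^\Q \Psi_{W_1}^{1-\cc}$ and $U_2 = T_f \otimes \Ind_K^\Q(\chi^{-1}\Psi_{W_2}^{1-\cc})$. The global Shapiro isomorphism is already recorded as \eqref{eq:shapiro}; applying the same formalism locally provides, at each place $\ell$ of $\Q$, a canonical splitting of $\rH^1(\Q_\ell, \Vsdag)$ into four pieces indexed by $(U_i, v)$ for $i\in\{1,2\}$ and $v$ a place of $K$ above $\ell$. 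Since both Selmer groups in the proposition impose no restriction at $\ell \in S \setminus \{p\}$ beyond the unramified one (and the relaxed vs.\ unramified conventions agree for the relevant twists), the proof reduces to a computation at $p$.

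The key calculation identifies the four rank-one $G_{\Q_p}$-summands of $V_\bfg \otimes V_{\bfg^*}|_{G_{\Q_p}}$ with the four slots $(U_i, v)$ above $p$. Under hypothesis \eqref{eq:chi-dist}, we have $V_\bfg \simeq \Ind_K^\Q(\psi^{-1}\Psi_{T_1})$ and $V_{\bfg^*} \simeq \Ind_K^\Q(\psi\Psi_{T_2})$, which restricted to $G_{\Q_p}$ split as $V_\bfg|_{G_{\Q_p}} = V_\bfg^+ \oplus V_\bfg^-$ with $V_\bfg^+$ the ramified sub corresponding to the $\pp$-piece (since $\psi$ is unramified at $p$ and $\Psi_{T_1}|_{G_{K_\pp}}$ is the ramified universal character of $\Gamma_{\pp^\infty}$) and $V_\bfg^-$ the unramified quotient corresponding to $\ppbar$; likewise for $V_{\bfg^*}$. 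A direct Mackey-type character computation then yields
\[
V_\bfg^+ V_{\bfg^*}^+ \oplus V_\bfg^- V_{\bfg^*}^- \simeq \Ind_K^\Q(\Psi_{T_1}\Psi_{T_2})|_{G_{\Q_p}}, \qquad V_\bfg^+ V_{\bfg^*}^- \oplus V_\bfg^- V_{\bfg^*}^+ \simeq \Ind_K^\Q(\chi^{-1}\Psi_{T_1}\Psi_{T_2}^\cc)|_{G_{\Q_p}},
\]
using $\psi^{-1}\cdot\psi^\cc = \chi^{-1}$. After the self-dual twist by $\mathcal{X}^{-1}$, the first decomposition identifies with $U_1|_{G_{\Q_p}}$ split across the places $(\pp,\ppbar)$, and the second with $U_2|_{G_{\Q_p}}$ split likewise.

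Tensoring with $V_\bff|_{G_{\Q_p}} = V_\bff^+ \oplus V_\bff^-$ (the ordinary filtration after specialisation at $Q_0$) yields a decomposition of $\Vsdag|_{G_{\Q_p}}$ into eight rank-one lines $V_\bff^\epsilon V_\bfg^{\epsilon_1} V_{\bfg^*}^{\epsilon_2}$, each sitting inside a unique slot $(U_i, v)$. For $\mathscr{F}_p^\bff(\Vsdag) = V_\bff^+ \otimes V_\bfg \otimes V_{\bfg^*}$, the $V_\bff^+$-line occupies each of the four slots, so the $\bff$-unbalanced local condition at $p$ translates under Shapiro into the ordinary condition ($T_f^+$) at both $\pp$ and $\ppbar$ on each of $U_1$ and $U_2$, giving exactly ${\rm Sel}(K, T_f \otimes \Psi_{W_1}^{1-\cc}) \oplus {\rm Sel}(K, T_f \otimes \chi^{-1}\Psi_{W_2}^{1-\cc})$. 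For $\mathscr{F}_p^{\rm bal}(\Vsdag)$, the four lines with at least two of $\epsilon,\epsilon_1,\epsilon_2$ equal to $+$ distribute as: both $V_\bff^\pm \otimes V_\bfg^+ V_{\bfg^*}^+$ (filling all of the slot $(U_1,\pp)$), nothing in $(U_1,\ppbar)$, and the $V_\bff^+$-line in each of $(U_2,\pp)$ and $(U_2,\ppbar)$; this is precisely the relaxed-at-$\pp$ and strict-at-$\ppbar$ condition on $U_1$ and the ordinary condition on $U_2$, matching the right-hand side of the first asserted isomorphism. The main obstacle is the careful bookkeeping of this filtration matching, in particular verifying the Mackey decomposition that pairs the CM-split pieces of $V_\bfg \otimes V_{\bfg^*}$ with the summands $U_1, U_2$ and the places $\pp, \ppbar$; hypothesis \eqref{eq:chi-dist} enters essentially to guarantee the CM identifications of $V_\bfg$ and $V_{\bfg^*}$ underpinning the whole calculation.
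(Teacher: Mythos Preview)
Your proposal is correct and follows essentially the same approach as the paper: decompose $\Vsdag\vert_{G_{\Q_p}}$ via the CM splittings of $V_\bfg$ and $V_{\bfg^*}$, track how the four rank-one pieces $V_\bfg^{\epsilon_1}\otimes V_{\bfg^*}^{\epsilon_2}$ distribute among the Shapiro ``slots'' $(U_i,\pp)$ and $(U_i,\ppbar)$, and then read off which lines of $T_f$ the filtrations $\mathscr{F}_p^{\rm bal}$ and $\mathscr{F}_p^{\bff}$ place in each slot. Your organisation via the eight lines $V_\bff^{\epsilon}\otimes V_\bfg^{\epsilon_1}\otimes V_{\bfg^*}^{\epsilon_2}$ is a slightly more systematic bookkeeping of exactly the computation the paper carries out in \eqref{eq:bal-CM}--\eqref{eq:bal-shapiro}.
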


\begin{proof}
This is shown in \cite[Prop.~5.3.1]{C-Do}; we recall the details for the convenience of the reader. From (\ref{eq:dec-V}), we see that 
\[
\Vsdag\vert_{G_{\Q_p}}\simeq\bigl(T_f\otimes\Psi_{W_1}^{1-\cc}\bigr)\oplus(T_f\otimes\Psi_{W_1}^{\cc-1}\bigr)\oplus\bigl(T_f\otimes\chi^{-1}\Psi_{W_2}^{1-\cc}\bigr)\oplus\bigl(T_f\otimes\chi\Psi_{W_2}^{\cc-1}\bigr)
\]
noting that $\chi^\cc=\chi^{-1}$, and we find that the balanced local condition is given by
\begin{equation}\label{eq:bal-CM}
\mathscr{F}^{\rm bal}_p(\Vsdag)
\simeq\bigl(T_f\otimes\Psi_{W_1}^{1-\cc}\bigr)\oplus\bigl(T_f^+\otimes\chi^{-1}\Psi_{W_1}^{1-\cc}\bigr)\oplus\bigl(T_f^+\otimes\chi\Psi_{W_2}^{\cc-1}\bigr),
\end{equation}
where $T_f^+\subset T_f$ is the specialisation of $V_\bff^+$ at $Q_0$. Put $\widetilde{\mathbb{V}}_{f\boldsymbol{gg}^*}^\dagger=(T_f\otimes\Psi_{W_1}^{1-\cc})\oplus(T_f\otimes\chi^{-1}\Psi_{W_2}^{1-\cc})$, and note that
\begin{equation}\label{eq:sh-iso}
\rH^1(\Q,\Vsdag)\simeq\rH^1(K,\widetilde{\mathbb{V}}_{f\boldsymbol{gg}^*}^\dagger)
\end{equation}
by Shapiro's lemma. For $\qq\in\{\pp,\ppbar\}$, letting $\mathscr{F}_\qq^{\rm bal}(\widetilde{\mathbb{V}}_{f\boldsymbol{gg}^*}^\dagger)$ be the submodule of $\widetilde{\mathbb{V}}_{f\boldsymbol{gg}^*}^\dagger$ corresponding to $
\mathscr{F}^{\rm bal}_p(\Vsdag)$, from \eqref{eq:bal-CM} we have
\begin{equation}\label{eq:bal-shapiro}
\begin{cases}
\mathscr{F}^{\rm bal}_\pp(\widetilde{\mathbb{V}}_{f\boldsymbol{gg}^*}^\dagger)\simeq\bigl(T_f\otimes\Psi_{W_1}^{1-\cc}\bigr)\oplus\bigl(T_f^+\otimes\chi^{-1}\Psi_{W_2}^{1-\cc}\bigr),\\[0.2em]
\mathscr{F}^{\rm bal}_{\ppbar}(\widetilde{\mathbb{V}}_{f\boldsymbol{gg}^*}^\dagger)\simeq T_f^+\otimes\chi^{-1}\Psi_{W_2}^{1-\cc},
\end{cases}
\end{equation}
using again that $\chi^\cc=\chi^{-1}$ for the second isomorphism. As a result, 
under the isomorphism \eqref{eq:sh-iso} we have
\begin{align*}
{\rm Sel}^{\rm bal}(\Q,\Vsdag)&\simeq\\
\quad\ker\biggl\{
\rH^1(G_{K,\Sigma},&\widetilde{\mathbb{V}}_{f\boldsymbol{gg}^*}^\dagger)\rightarrow\frac{\rH^1(K_\pp,\widetilde{\mathbb{V}}_{f\boldsymbol{gg}^*}^\dagger)}{\rH^1(K_\pp,(T_f\otimes\Psi_{W_1}^{1-\cc})\oplus(T_f^+\otimes\chi^{-1}\Psi_{W_2}^{1-\cc}))}\times\frac{\rH^1(K_{\ppbar},\widetilde{\mathbb{V}}_{f\boldsymbol{gg}^*}^\dagger)}{\rH^1(K_{\ppbar},T_f^+\otimes\chi^{-1}\Psi_{W_2}^{1-\cc})}\bigr\},
\end{align*}
yielding the result in this case. 
Similarly, we find that the $\unb$-balanced local condition is given by
\begin{equation}
\begin{cases}
\mathscr{F}^{\unb}_\pp(\widetilde{\mathbb{V}}_{f\boldsymbol{gg}^*}^\dagger)\simeq\bigl(T_f^+\otimes\Psi_{W_1}^{1-\cc}\bigr)\oplus\bigl(T_f^+\otimes\chi^{-1}\Psi_{W_2}^{1-\cc}\bigr),\\[0.2em]
\mathscr{F}^{\unb}_{\ppbar}(\widetilde{\mathbb{V}}_{f\boldsymbol{gg}^*}^\dagger)\simeq\bigl(T_f^+\otimes\Psi_{W_1}^{1-\cc}\bigr)\oplus\bigl(T_f^+\otimes\chi^{-1}\Psi_{W_2}^{1-\cc}\bigr),\nonumber
\end{cases}
\end{equation}
and as above we arrive at the claimed description of ${\rm Sel}^\unb(\Q,\Vsdag)$.
\end{proof}

As a consequence, we also obtain decompositions for corresponding Selmer groups with coefficients in $\Asdag={\rm Hom}_{\Z_p}(\Vsdag,\mu_{p^\infty})$, mirroring in the case of ${\rm Sel}^{\unb}(\Q,\Asdag)$ the factorisation of $p$-adic $L$-functions in Proposition~\ref{prop:factor-L}. Put $A_f={\rm Hom}_{\Z_p}(T_f,\mu_{p^\infty})$.

\begin{cor}\label{cor:factor-S}
Under the asumption in Proposition~\ref{prop:factor-S}, we have the decompositions
\begin{align*}
{\rm Sel}^{\rm bal}(\Q,\Asdag)&\simeq{\rm Sel}_{0,\emptyset}(K,A_f\otimes\Psi_{W_1}^{\cc-1})\oplus{\rm Sel}(K,A_f\otimes\chi\Psi_{W_2}^{\cc-1}),\\
{\rm Sel}^{\unb}(\Q,\Asdag)&\simeq
{\rm Sel}(K,A_f\otimes\Psi_{W_1}^{\cc-1})\oplus{\rm Sel}(K,A_f\otimes\chi\Psi_{W_2}^{\cc-1}).
\end{align*}
\end{cor}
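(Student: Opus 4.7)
The plan is to mirror the proof of Proposition~\ref{prop:factor-S}, now passing to Pontryagin duals and using the compatibility of Shapiro's lemma with local Tate duality.

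First I would dualise the $G_K$-module decomposition of $\Vsdag$ underlying (\ref{eq:dec-V}) to obtain
\[
\Asdag\vert_{G_K}\simeq\bigl(A_f\otimes\Psi_{W_1}^{\cc-1}\bigr)\oplus\bigl(A_f\otimes\chi\Psi_{W_2}^{\cc-1}\bigr),
\]
using $A_f={\rm Hom}_{\Z_p}(T_f,\mu_{p^\infty})$, $\chi^\cc=\chi^{-1}$, and the fact that dualising $\Psi^{1-\cc}$ yields $\Psi^{\cc-1}$ (the cyclotomic twist being absorbed into the definition of $A_f$). Shapiro's lemma then produces the analogue of (\ref{eq:sh-iso}) for $\Asdag$, together with local versions at every finite prime of $\Q$, all compatibly with local Tate duality.

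Next I would compute the local conditions. At $\ell\ne p$ the ``relaxed'' condition entering the definition of ${\rm Sel}^\any(\Q,\Asdag)$ corresponds under Shapiro to the unramified condition implicit in the right-hand Selmer groups, which is routine using that $V_f$ is unramified outside $pN_f$. At the primes above $p$, by definition $\rH^1_\any(\Q_p,\Asdag)$ is the annihilator of $\rH^1_\any(\Q_p,\Vsdag)$, so I can read off the dual local conditions directly from (\ref{eq:bal-shapiro}). The orthogonal-complement operation swaps the relaxed condition (all of $\rH^1$) with the strict condition (zero), while it takes the Greenberg condition $\rH^1(L_w,T_f^+\otimes\cdots)$ to the Greenberg condition on $A_f$ defined via the dual of $T_f/T_f^+$. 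In the balanced case this flips the $\pp$ and $\ppbar$ roles on the first summand---converting $(\emptyset,0)$ into $(0,\emptyset)$---while leaving the second summand ordinary; in the unbalanced case every local condition is already Greenberg, which is self-dual under local Tate duality, so both factors remain ordinary. This yields the stated decompositions.

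The only real subtlety will be bookkeeping: carefully tracking how the $\pp\leftrightarrow\ppbar$ asymmetry of the balanced condition is flipped by duality, which is precisely what accounts for the switch in labels from ${\rm Sel}_{\emptyset,0}$ in Proposition~\ref{prop:factor-S} to ${\rm Sel}_{0,\emptyset}$ here. Apart from this, the argument requires nothing beyond Proposition~\ref{prop:factor-S}, the self-duality of Greenberg's ordinary condition under local Tate duality, and the compatibility of induction with that duality.
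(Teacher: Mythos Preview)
Your proposal is correct and is precisely the paper's approach: the paper's proof reads in its entirety ``This is immediate from Proposition~\ref{prop:factor-S} and local Tate duality,'' and your write-up is simply an explicit unpacking of that sentence. In particular, your observation that dualising flips the $(\emptyset,0)$ condition to $(0,\emptyset)$ while leaving the ordinary condition self-dual is exactly what ``local Tate duality'' is doing here.
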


\begin{proof}
This is immediate from Proposition~\ref{prop:factor-S} and local Tate duality.
\end{proof}

\section{Iwasawa main conjectures}\label{sec:IMC}

Keeping the setting from  $\S\ref{subsec:dec-S}$, and put $\Lambda_2=\mathscr{O}\dBr{S_1,S_2}$ and
\[
\Vsdagg=\Vsdag\hat\otimes_{\Lambda_2}\Lambda_2/(S_1-S_2),
\]
where $\Lambda_2=\mathscr{O}\dBr{S_1,S_2}$. 
In this section we recall the diagonal cycle main conjecture formulated in \cite{ACR} specialised to our setting, and explain its relation with the anticyclotomic Iwasawa main conjecture for $f$.

\subsection{Diagonal cycle main conjecture}

Let $\kappa(\bff,\bfg,\bfg^*)\in\rH^1(\Q,\Vdag)$ be the \emph{big diagonal class} constructed in \cite[\S{8.1}]{BSV}, where $\bff$ is the primitive Hida family passing through $f$, and denote by 
\begin{equation}\label{eq:diag}
\kappa(f,\ggstar)\in\rH^1(\Q,\Vsdagg)
\end{equation}
its specialisation. More precisely, from \emph{loc.\,cit.} one  directly obtains a class as above with coefficient in a representation $\Vdag(N)$  isomorphic (non-canonically) to finitely many copies of $\Vdag$, where $N={\rm lcm}(N_f,N_g,N_{g^*})={\rm lcm}(N_f,D_K\ell)$; to obtain \eqref{eq:diag} we apply the projection $\Vdag(N)\rightarrow\Vdag$ corresponding to the choice of level-$N$ test vectors for $(\bff,\bfg,\bfg^*)$ furnished by  \cite[Thm.~A]{hsieh-triple}.

From the decompositions in Proposition~\ref{prop:factor-S} we obtain
\begin{equation}\label{eq:dec-S-gg}
\begin{aligned}
{\rm Sel}^{\rm bal}(\Q,\Vsdagg)&\simeq{\rm Sel}_{\emptyset,0}(K,T_f\otimes\Psi_{T}^{1-\cc})\oplus{\rm Sel}(K,T_f\otimes\chi^{-1}\Psi_{W_2}^{1-\cc})_{/W_2},\\
{\rm Sel}^{\unb}(\Q,\Vsdagg)&\simeq
{\rm Sel}(K,T_f\otimes\Psi_{T}^{1-\cc})\oplus{\rm Sel}(K,T_f\otimes\chi^{-1}\Psi_{W_2}^{1-\cc})_{/W_2},
\end{aligned}
\end{equation}
where $T=T_1=\mathbf{v}^{-1}(1+S_1)-1$ and the subscript $/W_2$ denotes the cokernel of multiplication by $W_2$. Put
\[
\mathscr{F}_p^3(\Vsdag):=T_f^+\hat\otimes_{\mathscr{O}}V_{\bff}^+\hat\otimes_{\mathscr{O}}V_{\bfg^*}^+\otimes\mathcal{X}^{-1},
\]
and denote by $\mathscr{F}_p^3(\Vsdagg)$ the resulting submodule of $\Vsdagg$. Similarly defining $\mathscr{F}_p^{\rm bal}(\Vsdagg)$ from $\mathscr{F}_p^{\rm bal}(\Vsdag)=\mathscr{F}_p^2(\Vsdag)$, in terms of the description in \eqref{eq:bal-shapiro} we find that
\begin{equation}\label{eq:fil-3}
\begin{cases}
\mathscr{F}^{\rm bal}_\pp(\widetilde{\mathbb{V}}_{f\ggstar}^\dagger)/\mathscr{F}^{3}_\pp(\widetilde{\mathbb{V}}_{f\ggstar}^\dagger)\simeq\bigl(T_f^-\otimes\Psi_{T}^{1-\cc}\bigr)\oplus\bigl(T_f^+\otimes\chi^{-1}\Psi_{W_2}^{1-\cc}\bigr)_{/W_2},\\[0.2em]
\mathscr{F}^{\rm bal}_{\ppbar}(\widetilde{\mathbb{V}}_{f\ggstar}^\dagger)/\mathscr{F}^{3}_{\ppbar}(\widetilde{\mathbb{V}}_{f\ggstar}^\dagger)\simeq( T_f^+\otimes\chi^{-1}\Psi_{W_2}^{1-\cc})_{/W_2},
\end{cases}
\end{equation}
where $T_f^-:=T_f/T_f^+$. 

It follows from \cite[Cor.~8.2]{BSV} that $\kappa(f,\ggstar)$ lies in the balanced Selmer group ${\rm Sel}^{\rm bal}(\Q,\Vsdagg)$. Therefore, viewing this class in $\rH^1(K,\widetilde{\mathbb{V}}^\dagger_{f\ggstar})$ via the isomorphism \eqref{eq:sh-iso}, we can consider the image of ${\rm loc}_\pp(\kappa(f,\ggstar))$ under the natural map
\[
p_f^-:
\rH^1\bigl(K_\pp,\mathscr{F}^{\rm bal}_\pp(\widetilde{\mathbb{V}}_{f\ggstar}^\dagger)\bigr)
\rightarrow\rH^1\bigl(K_\pp,\mathscr{F}^{\rm bal}_\pp(\widetilde{\mathbb{V}}_{f\ggstar}^\dagger)/\mathscr{F}^{3}_\pp(\widetilde{\mathbb{V}}_{f\ggstar}^\dagger)\bigr)\rightarrow\rH^1(K_\pp,T_f^-\otimes\Psi_T^{1-\cc})
\]
arising from the projection onto the first factor in \eqref{eq:fil-3}. 

Put
\[
\Lambda=\mathscr{O}\dBr{T}.
\]
Let $u:=1+p$, and for any $\Lambda$-module $M$ and integer $k$, denote by $M_k$ the specialisation of $M$ at $T=u^{k-2}-1$. Letting $\Phi$ be the field of fractions of $\mathscr{O}$, it is then easy to see that the Bloch--Kato logarithm and dual exponential maps define isomorphisms
\begin{equation}\label{eq:BK}
\begin{aligned}
{\rm log}_{\pp}:\rH^1(K_{\pp},T_f^-\otimes\Psi_T^{1-\cc})_k\otimes\Q_p&\rightarrow \Phi,\quad k\geq 3,\\
{\rm exp}_{\pp}^*:\rH^1(K_{\pp},T_f^-\otimes\Psi_T^{1-\cc})_k\otimes\Q_p&\rightarrow \Phi,\quad k=2.
\end{aligned}
\end{equation}


The following fundamental result due to Bertolini--Seveso--Venerucci  and Darmon--Rotger relates $p_f^-({\rm loc}_\pp(\kappa(f,\ggstar)))$ to the restricted triple product $p$-adic $L$-function $\mathscr{L}_p^{\unb}(f,\ggstar)$ in \eqref{eq:Lp-1}. 

\begin{thm}[Explicit reciprocity law]\label{thm:ERL}
There is an injective $\Lambda$-module homomorphism
\[
{\rm Log}^{\unb}:\rH^1(K_\pp,T_f^-\otimes\Psi_{T}^{1-\cc})\rightarrow\Lambda
\]
with pseudo-null cokernel satisfying for any $\mathfrak{Z}\in\rH^1(K_\pp,T_f^-\otimes\Psi_{T}^{1-\cc})$ the interpolation property
\[
{\rm Log}^{\unb}(\mathfrak{Z})_k=c_k\times\begin{cases}
{\rm log}_{\pp}(\mathfrak{Z}_k)&\textrm{if $k\geq 3$,}\\[0.2em]
{\rm exp}_{\pp}(\mathfrak{Z}_k)&\textrm{if $k=2$,}
\end{cases}
\]
where $c_k$ is an explicit nonzero constant, and such that
\[
{\rm Log}^{\unb}\bigl(p_f^-({\rm loc}_\pp(\kappa(f,\ggstar)))\bigr)=\mathscr{L}_p^\unb(f,\ggstar).
\]
\end{thm}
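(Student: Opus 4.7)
\emph{Plan of proof.} The statement is essentially a one-variable specialisation of the three-variable explicit reciprocity law proved by Bertolini--Seveso--Venerucci \cite{BSV} and Darmon--Rotger \cite{DR3} for the big diagonal class $\kappa(\bff,\bfg,\bfg^*)$. I would carry this out in three steps.

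First, construct ${\rm Log}^\unb$ directly. Since $p$ splits in $K$, $K_\pp\simeq\Q_p$, and the restriction of $T_f^-\otimes\Psi_T^{1-\cc}$ to $G_{K_\pp}$ is the twist of the one-dimensional unramified $\cO$-module on which $\mathrm{Fr}_p$ acts by $\alpha_p$ by the universal anticyclotomic character $\Psi_T^{1-\cc}$. The Perrin-Riou/Loeffler--Zerbes big logarithm machinery attached to such a deformation produces a $\Lambda$-linear map ${\rm Log}^\unb:\rH^1(K_\pp,T_f^-\otimes\Psi_T^{1-\cc})\to\Lambda$, injective with pseudo-null cokernel, whose specialisation at $T\mapsto u^{k-2}-1$ recovers (up to an explicit nonzero constant $c_k$) the Bloch--Kato logarithm for $k\geq 3$, where the representation is in the convergent range, and the Bloch--Kato dual exponential at the critical boundary weight $k=2$. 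This gives the map and its interpolation property.

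Second, specialise the three-variable ERL. The works \cite{BSV,DR3} furnish an injective $\cR$-linear big logarithm on a suitable quotient of $\rH^1(\Q_p,\mathscr{F}_p^\bff(\Vdag))$ carrying the image of $\kappa(\bff,\bfg,\bfg^*)$ under $\mathrm{loc}_p$ and the projection $\mathscr{F}_p^\bff(\Vdag)\twoheadrightarrow\mathscr{F}_p^\bff(\Vdag)/\mathscr{F}_p^3(\Vdag)$ onto $\mathscr{L}_p^\bff(\bff,\bfg,\bfg^*)$. Specialising $\bff$ at $Q_0$ and taking the quotient by $S_1-S_2$, the variable $W_1$ collapses to $T$ while $W_2$ becomes zero, so in view of \eqref{eq:fil-3} the source decomposes via Shapiro's lemma as the direct sum of $\rH^1(K_\pp,T_f^-\otimes\Psi_T^{1-\cc})$ and a $W_2$-torsion (finite) summand; under this identification the map $p_f^-$ is precisely the projection onto the first factor. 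Meanwhile the right-hand side becomes $\mathscr{L}_p^\unb(f,\ggstar)\in\mathscr{O}\dBr{S}$ by the very definition in \eqref{eq:Lp-1}.

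Third, identify the two big logarithms. The restriction of the BSV/DR three-variable Perrin-Riou map to the anticyclotomic diagonal agrees with the intrinsic ${\rm Log}^\unb$ of the first step, as both are characterised by the same interpolation of Bloch--Kato logarithm (and dual exponential at $k=2$) at all specialisations with trivial wild part. Combining this with the specialised ERL then yields the desired identity.

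\emph{Main obstacle.} The technical core is the matching of normalisations: one must verify that the Perrin-Riou map attached intrinsically to the one-variable anticyclotomic deformation coincides with the restriction of the three-variable big logarithm, and that the explicit constants $c_k$ line up with the modified $p$-Euler factors $\mathcal{E}_p(\mathscr{F}_p^\bff(\VQdag))$ appearing in Theorem~\ref{thm:hsieh-triple} at weight-two specialisations. Once the period and Frobenius normalisations are tracked carefully through the definitions of $\mathscr{L}_p^\bff(\bff,\bfg,\bfg^*)$ and the comparison isomorphisms at each classical weight, the specialisation argument in Steps 2 and 3 is essentially formal.
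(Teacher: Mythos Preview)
Your proposal is correct and follows essentially the same route as the paper: the paper's proof is little more than a pointer to the literature, stating that ${\rm Log}^{\unb}$ is obtained by specialising the three-variable big regulator map of \cite[\S{7.1}]{BSV} and that the explicit reciprocity law follows from \cite[Thm.\,A]{BSV} and \cite[Thm.\,10]{DR3}, with details in \cite[Prop.\,7.3]{ACR} and \cite[Thm.\,5.1.1]{C-Do}. Your three-step outline (construct the one-variable Perrin-Riou map, specialise the three-variable ERL along $Q_0$ and $S_1=S_2$, and match the two) is exactly how those references unpack the argument; the only cosmetic difference is that you first build ${\rm Log}^{\unb}$ intrinsically and then compare, whereas the paper defines it directly as the specialisation.
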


\begin{proof}
The map ${\rm Log}^\bff$ is obtained by specialising the three-variable big regulator map of \cite[\S{7.1}]{BSV}, and the explicit reciprocity law is a consequence of the results of \cite[Thm.\,A]{BSV} and  \cite[Thm.\,10]{DR3}. The details are explained in \cite[Prop.\,7.3]{ACR} and \cite[Thm.\,5.1.1]{C-Do}.
\end{proof}


The next result shows the equivalence between a ``diagonal cycle main conjecture'' in the spirit of \cite[Conj.\,B]{PR-HP} and the Iwasawa--Greenberg main conjecture for $\mathscr{L}_p^{\unb}(f,\ggstar)$.

\begin{prop}\label{prop:equiv}
The following statements (1)-(2) are equivalent:
\begin{enumerate}
\item{} $\kappa(f,\ggstar)$ is not $\Lambda$-torsion, 
\[
{\rm rank}_\Lambda\bigl({\rm Sel}^{\rm bal}(\Q,\Vsdagg)\bigr)={\rm rank}_\Lambda\bigl(X^{\rm bal}(\Q,\Asdagg)\bigr)=1, 
\]
and the following equality holds in $\Lambda\otimes\Q_p$:
\[
{\rm char}_{\Lambda}\bigl(X^{\rm bal}(\Q,\Asdagg)_{\rm tors}\bigr)={\rm char}_{\Lambda}\bigl({\rm Sel}^{\rm bal}(\Q,\Vsdagg)/(\kappa(f,\ggstar))\bigr)^2.
\]
where the subscript ${\rm tors}$ denotes the $\Lambda$-torsion submodule.
\item{} $\mathscr{L}_p^\unb(f,\ggstar)$ is nonzero, the modules ${\rm Sel}^{\unb}(\Q,\Vsdagg)$ and $X^{\unb}(\Q,\Asdagg)$ are both $\Lambda$-torsion, and 
\[
{\rm char}_\Lambda\bigl(X^{\unb}(\Q,\Asdagg)\bigr)=\bigl(\mathscr{L}_p^\unb(f,\ggstar)^2\bigr)
\]
in $\Lambda\otimes\Q_p$.
\end{enumerate}
\end{prop}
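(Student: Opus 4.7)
The plan is to deduce the equivalence from Theorem~\ref{thm:ERL} combined with a Poitou--Tate global duality argument, in the spirit of \cite{ACR}. Under the decompositions of Proposition~\ref{prop:factor-S} and Corollary~\ref{cor:factor-S}, the balanced and $\bff$-unbalanced Selmer groups and their Pontryagin duals each split as a direct sum of a ``first summand'' associated to $T_f\otimes\Psi_T^{1-\cc}$ over $K$ and a common ``second summand'' involving the character $\chi^{-1}\Psi_{W_2}^{1-\cc}$ (respectively its dual). Since the common summands are $\Lambda$-torsion of the same characteristic ideal on both sides, they contribute identically to both (1) and (2), and one is reduced to establishing the equivalence of the two formulations restricted to the first summand, namely between the ``relaxed--strict'' (at $\pp$, $\ppbar$) and the ``ordinary'' Selmer groups for $T_f\otimes\Psi_T^{1-\cc}$ over $K$.

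For this reduced problem, the key input is Theorem~\ref{thm:ERL}, which identifies the image of $\kappa(f,\ggstar)$ under the composition $\mathrm{Log}^{\unb}\circ p_f^-\circ\mathrm{loc}_\pp$ with $\mathscr{L}_p^{\unb}(f,\ggstar)$. A two-step Poitou--Tate argument going through the intermediate Selmer group $\mathrm{Sel}_{+,0}(K,T_f\otimes\Psi_T^{1-\cc})$ (ordinary at $\pp$, strict at $\ppbar$) yields short exact sequences comparing the relaxed--strict and ordinary conditions, with local quotients $\rH^1(K_\pp,T_f^-\otimes\Psi_T^{1-\cc})$ and $\rH^1(K_{\ppbar},T_f^+\otimes\Psi_T^{1-\cc})$ respectively. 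Dualising the second comparison via local Tate duality and chaining the resulting sequences produces characteristic ideal identities relating $X^{\rm bal}_{\rm tors}$, $X^{\unb}$, and the cokernel of $p_f^-\circ\mathrm{loc}_\pp$; combining these with Theorem~\ref{thm:ERL} and the fact that $\mathrm{Log}^{\unb}$ is injective with pseudo-null cokernel yields the desired equivalence. The square exponents on both sides match because the Poitou--Tate comparisons at $\pp$ and $\ppbar$ contribute symmetrically under local Tate duality, effectively doubling the characteristic ideal content on each side.

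The principal technical obstacle is the rigorous cancellation of the ``second summand'' contributions from both statements, and the careful tracking of pseudo-null errors introduced by $\mathrm{Log}^{\unb}$ so that the final equalities hold in $\Lambda\otimes\Q_p$. In particular, the characteristic ideals of the second summands of $X^{\rm bal}$ and $X^{\unb}$ must be shown to be identical (so as to cancel from (1) and (2)), which requires a direct comparison using the local conditions at $\pp,\ppbar$ defining the balanced and $\bff$-unbalanced structures restricted to this summand, where the two indeed coincide as is visible from \eqref{eq:bal-shapiro}.
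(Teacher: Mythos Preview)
Your proposal captures the essential ingredients---Theorem~\ref{thm:ERL} together with a Poitou--Tate global duality comparison---and the two-step comparison through an intermediate Selmer structure that you sketch in the second paragraph is indeed the core mechanism. The paper, however, runs this argument \emph{directly} on the full triple product Selmer groups for $\Vsdagg$ and $\Asdagg$ (as in \cite[Thm.~7.15]{ACR}), without first invoking the decompositions of Proposition~\ref{prop:factor-S} and Corollary~\ref{cor:factor-S}. Those decompositions are brought in only later, in the proof of Theorem~\ref{thm:main}, precisely where the additional hypothesis $L(E/K,\chi,1)\neq 0$ from Choice~\ref{choice:setting} is available to control the second summand.

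Your detour through the decomposition introduces a gap: you assert that the common second summand is $\Lambda$-torsion, but at the level of Proposition~\ref{prop:equiv} no hypothesis guaranteeing this is in force, and without it the rank assertions in (1) and (2) do not cleanly reduce to statements about the first summand alone (for instance, if the second summand had positive $\Lambda$-rank, statement~(2) would fail automatically while statement~(1) could in principle still hold with the class $\kappa(f,\ggstar)$ living in the second summand). The direct approach avoids this entirely. Since, as you correctly observe from \eqref{eq:bal-shapiro}, the balanced and $\bff$-unbalanced local conditions coincide on the second summand, the local quotients appearing in the Poitou--Tate exact sequences comparing ${\rm Sel}^{\rm bal}$ with ${\rm Sel}^{\bff}$ involve only the rank-one $\Lambda$-module $\rH^1(K_\pp,T_f^-\otimes\Psi_T^{1-\cc})$---exactly the target of $p_f^-\circ{\rm loc}_\pp$ and the domain of ${\rm Log}^{\bff}$. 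The second summand is then carried along uniformly on both sides of every exact sequence and never needs to be isolated or shown to be torsion.
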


\begin{proof}
This follows from an argument by now standard using Theorem~\ref{thm:ERL} and global duality as in \cite[Thm.\,7.15]{ACR}. 
\end{proof}



\subsection{Anticyclotomic main conjecture for modular forms}

For our later use, here we record some known results on the anticyclotomic Iwasawa main conjecture for modular forms. 

Let $\mathbb{F}$ be the residue field of $\mathscr{O}$, and denote by 
\[
\bar{\rho}_f:G_\Q\rightarrow{\rm GL}_2(\mathbb{F})
\] 
the residual representation associated to $f$. 
Factor $N_f=N^+N^-$ as in \eqref{eq:N-pm}. Following \cite{KPW}, we say that $\emph{Assumption~(A)}$ holds if:
\begin{itemize}
\item $\bar{\rho}_f$ is absolutely irreducible,
\item $\bar{\rho}_f$ is ramified at every prime $q\mid N^-$ with $q\equiv\pm{1}\pmod{p}$,
\item either $p>5$ or the image of $\bar{\rho}_f$ contains a conjugate of ${\rm GL}_2(\mathbb{F}_p)$.
\end{itemize}
Note that this is the same as Assumption~(A) from \cite{KPW}, except that the latter also includes the non-anomalous condition $a_p(f)\not\equiv{1}\pmod{p}$; that the next result holds without this condition follows from recent advances on Ihara's lemma, \cite{manning-shotton}.  

\begin{thm}\label{thm:BD-SU}
Suppose that the ordinary $p$-stabilised newform $f\in S_2(\Gamma_0(pN_f))$ is $p$-old, and that $N^-$ is the squarefree product of an odd number of primes. Under Assumption~(A),
the module ${\rm Sel}(K,A_f\otimes\Psi_T^{\cc-1})$ is $\Lambda$-cotorsion, and
\[
{\rm char}_\Lambda\bigl({\rm Sel}(K,A_f\otimes\Psi_T^{\cc-1})^\vee\bigr)=\bigl(\Theta_{f/K}(T)^2\bigr)
\]
in $\Lambda\otimes\Q_p$.
\end{thm}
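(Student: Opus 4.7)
The stated equality is the anticyclotomic Iwasawa--Greenberg main conjecture for $f/K$ in the \emph{definite} setting (which is forced by $N^-$ being a squarefree product of an odd number of primes). The plan is to assemble this equality from two divisibilities in the literature and then handle the removal of the non-anomalous hypothesis via \cite{manning-shotton}. Concretely, \cite{KPW} establishes the theorem under Assumption~(A) together with the non-anomalous condition $a_p(f)\not\equiv 1\pmod p$, and as signalled in the paragraph preceding the statement the residual non-anomalous hypothesis can be removed using the Ihara-type lemma of \emph{loc.\,cit.}

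For the divisibility ${\rm char}_\Lambda\bigl({\rm Sel}(K,A_f\otimes\Psi_T^{\cc-1})^\vee\bigr)\supseteq(\Theta_{f/K}(T)^2)$, the approach is the Kolyvagin-type Euler system argument of Bertolini--Darmon, refined by Pollack--Weston and Chida--Hsieh. Since we are in the definite case there are no Heegner points on $X_0(N_f)$, and one instead works with CM points on Gross's zero-dimensional Shimura set for the quaternion algebra ramified precisely at the primes dividing $N^-$ and at $\infty$; these points interpolate $p$-adically along the anticyclotomic tower to produce the theta element $\Theta_{f/K}(T)$ of Theorem~\ref{thm:BD-theta}. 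A bounding argument, using first-order deformation classes obtained from level-raising congruences at admissible primes, annihilates the Selmer group modulo $p^n$ by theta elements of auxiliary quadratic twists; taking inverse limits yields the desired divisibility. The square on $\Theta_{f/K}$ reflects the fact, recorded in Theorem~\ref{thm:BD-theta}, that $\Theta_{f/K}$ interpolates \emph{square roots} of central critical $L$-values.

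For the reverse divisibility ${\rm char}_\Lambda\bigl({\rm Sel}(K,A_f\otimes\Psi_T^{\cc-1})^\vee\bigr)\subseteq(\Theta_{f/K}(T)^2)$, the strategy is to specialise the Skinner--Urban three-variable main conjecture for ${\rm GL}_2/K$, proved via Eisenstein congruences on ${\rm U}(2,2)$, to the anticyclotomic line, and to invoke the factorisation of the two-variable $p$-adic $L$-function of $f/K$ whose anticyclotomic component is $\Theta_{f/K}^2$. I expect the chief obstacle to lie here: the non-anomalous assumption enters the Skinner--Urban setup to secure Taylor--Wiles patching for the relevant Hecke algebra at the anomalous prime, and one must invoke the Manning--Shotton Ihara lemma to supply the needed surjectivity and remove this condition. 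Combining the two divisibilities then gives the equality in $\Lambda\otimes\Q_p$; the $\Lambda$-cotorsion assertion is immediate, since having a nonzero characteristic ideal forces the module to be torsion, and $\Theta_{f/K}(T)\neq 0$ by Theorem~\ref{thm:BD-theta} together with known nonvanishing results for central $L$-values along the anticyclotomic tower.
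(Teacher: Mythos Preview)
Your overall strategy matches the paper's: assemble the equality from two divisibilities, one coming from the Bertolini--Darmon Euler system argument (as refined in \cite{pollack-weston,ChHs2,KPW}) and the other from Skinner--Urban \cite{SU} (the paper also cites Wan \cite{wan-HMF}) restricted to the anticyclotomic line. So the architecture is correct.

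However, you have misplaced the role of the non-anomalous hypothesis and of \cite{manning-shotton}. The condition $a_p(f)\not\equiv 1\pmod p$ enters on the \emph{Bertolini--Darmon} side, not the Skinner--Urban side: it is used in the level-raising step at admissible primes that underlies the Euler system construction in \cite{bdIMC,pollack-weston,ChHs2,KPW}. Ihara's lemma is precisely what controls the surjectivity needed for level-raising, and the advances in \cite{manning-shotton} are what allow one to drop the non-anomalous assumption from the KPW hypotheses. Your paragraph locating ``the chief obstacle'' in the Eisenstein congruence argument on ${\rm U}(2,2)$ is therefore off the mark.

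A smaller point: your derivation of $\Lambda$-cotorsion is logically circular as written, since characteristic ideals are only defined once torsion is known. In the paper (and in the literature), cotorsion is established \emph{as part of} the Bertolini--Darmon divisibility, not deduced a posteriori from the equality of ideals. The nonvanishing of $\Theta_{f/K}(T)$ (via Vatsal/Cornut) combined with the Euler system bound is what gives cotorsion; then the characteristic ideal makes sense and the two divisibilities can be compared.
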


\begin{proof}
This follows from the combination of the main result of Bertolini--Darmon \cite{bdIMC}, as later refined by several authors (see \cite{pollack-weston,ChHs2,KPW}), showing the $\Lambda$-cotornionness ${\rm Sel}(K,A_f\otimes\Psi_T^{\cc-1})$ and the integral divisibility 
\begin{equation}\label{eq:ES-argument}
{\rm char}_\Lambda\bigl({\rm Sel}(K,A_f\otimes\Psi_T^{\cc-1})^\vee\bigr)\supset\bigl(\Theta_{f/K}(T)^2\bigr),
\end{equation}
and the converse divisibility that follows from the two-variable main conjecture for $f/K$ proved in the work of Skinner--Urban and Wan \cite{SU,wan-HMF} (after inverting $p$ in the latter case) restricted to the anticyclotomic line. 
\end{proof}

\begin{rem}
By using the Euler system divisibility from \cite[Thm.~5.6.1]{C-Do} (using diagonal cycles rather than congruence arguments and Heegner points), the ramification hypothesis on $\bar\rho_f$ in Theorem~\ref{thm:BD-SU} can be replaced by the ``big image'' hypothesis in [\emph{op.\,cit.}, \S{3.3.2}].
\end{rem}

\section{Main result}

In this section we state and prove the main result of this note toward the nonvanishing conjectures of \cite{DR2.5} in the setting of rank two elliptic curves. 

\subsection{Generalised Kato classes}\label{subsec:GKC}


Let $E/\Q$ be an elliptic curve of conductor $N_f$, and let $p>3$ a prime of good ordinary reduction for $E$. Let $f\in S_2(\Gamma_0(pN_f))$ be the ordinary $p$-stabilisation of the newform associated to $E$, and let $(\bfg,\bfg^*)=(\boldsymbol{\theta}_\psi(S_1),\boldsymbol{\theta}_{\psi^{-1}}(S_2))$ be a dual pair of primitive CM Hida families as in \eqref{eq:gg*}. 
When $\chi=\psi/\psi^{\cc}$ satisfies \eqref{eq:chi-dist}, we have
\begin{equation}\label{eq:shapiro-sp}
\rH^1(\Q,\Vsdagg)\simeq\rH^1(K,T_f\otimes\Psi_{T}^{1-\cc})\oplus\rH^1(K,T_f\otimes\chi^{-1}\Psi_{W_2}^{1-\cc})_{/W_2}
\end{equation}
by specialising \eqref{eq:shapiro}. The following is a special case of the construction of generalised Kato classes by Darmon--Rotger \cite{DR2.5} in the \emph{adjoint case}.

\begin{defn}\label{def:GKC}
Let $\kappa_p(E)$ be the image of $\kappa(f,\ggstar)$ under the composition
\[
\rH^1(\Q,\Vsdagg)\rightarrow\rH^1(K,T_f\otimes\Psi_{T}^{1-\cc})\rightarrow\rH^1(K,T_f)\simeq\rH^1(K,T_pE),
\]
where the first arrow is the projection onto the first direct summand in \eqref{eq:shapiro-sp}, and the second is induced by the multiplication by $T$ on $T_f\otimes\Psi_T^{1-\cc}$. 
\end{defn}

By \eqref{eq:dec-S-gg}, the inclusion $\kappa(f,\ggstar)\in{\rm Sel}^{\rm bal}(\Q,\Vsdagg)$ from \cite[Cor.~8.2]{BSV} implies that 
\begin{equation}\label{eq:inBDP}
\kappa_p(E)\in{\rm Sel}_{\emptyset,0}(K,T_pE).
\end{equation}
In the following we shall view $\kappa_p(E)$ as a class with coefficients in $V_pE$.



\begin{prop}\label{prop:GKC-Sel}
The following implication holds 
\[
L(f/K,1)=0\quad\Longrightarrow\quad\kappa_p(E)\in{\rm Sel}(K,V_pE).
\]
\end{prop}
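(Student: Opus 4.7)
By \eqref{eq:inBDP} we already have $\kappa_p(E)\in{\rm Sel}_{\emptyset,0}(K,V_pE)$, and since the strict condition at $\ppbar$ is contained in the ordinary one, upgrading this to $\kappa_p(E)\in{\rm Sel}(K,V_pE)$ reduces to showing $p_f^-({\rm loc}_\pp(\kappa_p(E)))=0$ in $\rH^1(K_\pp,V_pE^-)$, where $p_f^-$ denotes the projection to the unramified quotient $V_pE^-=V_pE/V_pE^+$. The plan is to establish the chain of implications
\[
L(f/K,1)=0\ \Longrightarrow\ \Theta_{f/K}(0)=0\ \Longrightarrow\ \mathscr{L}_p^{\bff}(f,\ggstar)(0)=0\ \Longrightarrow\ p_f^-({\rm loc}_\pp(\kappa_p(E)))=0.
\]

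The first implication follows directly from Theorem~\ref{thm:BD-theta} specialised at the trivial character and $\zeta=1$ (so $n=0$ and $\mathcal{E}_p(f,1,1)=(1-\alpha_p^{-1})(1-\alpha_p)$), which yields
\[
\Theta_{f/K}(0)^2=(1-\alpha_p^{-1})^2(1-\alpha_p)^2\cdot\frac{L(f^\circ/K,1)}{(2\pi)^2\Omega_{f^\circ,N^-}}\cdot u_K^2\sqrt{D_K}\cdot\eps_p,
\]
so that $L(f/K,1)=0$ forces $\Theta_{f/K}(0)=0$ regardless of whether $\alpha_p=1$. The second implication follows from Proposition~\ref{prop:factor-L}: in the factorisation of $\mathscr{L}_p^{\bff}(f,\ggstar)$, the factor $\mathbf{w}^{-1}$ is a unit in $\mathscr{O}\dBr{T}$, the factors $C_{f,\chi}$ and $\eta_{f^\circ}/\eta_{f^\circ,N^-}$ are nonzero, and the remaining factor $\sqrt{L^{\rm alg}(f/K\otimes\chi,1)}$ is either nonzero or makes $\mathscr{L}_p^{\bff}(f,\ggstar)$ identically zero; in either case, $\Theta_{f/K}(0)=0$ forces $\mathscr{L}_p^{\bff}(f,\ggstar)(0)=0$.

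The third implication combines the explicit reciprocity law of Theorem~\ref{thm:ERL}, which gives ${\rm Log}^{\bff}(p_f^-({\rm loc}_\pp(\kappa(f,\ggstar))))=\mathscr{L}_p^{\bff}(f,\ggstar)$, with its interpolation property at $k=2$, namely ${\rm Log}^{\bff}(\mathfrak{Z})_2=c_2\cdot\exp_\pp^*(\mathfrak{Z}_2)$ with $c_2\neq 0$ and $\exp_\pp^*$ an isomorphism on the Bloch--Kato quotient by \eqref{eq:BK}. Hence $\mathscr{L}_p^{\bff}(f,\ggstar)(0)=0$ forces the vanishing of $\mathfrak{Z}_2$, the specialisation at $T=0$ of $p_f^-({\rm loc}_\pp(\kappa(f,\ggstar)))$. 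Finally, interpreting the ``multiplication by $T$'' step in Definition~\ref{def:GKC} as the specialisation map $\rH^1(K,T_f\otimes\Psi_T^{1-\cc})\to\rH^1(K,T_pE)$ arising from the short exact sequence $0\to M\xrightarrow{T}M\to T_f\to 0$ (with $M=T_f\otimes\Psi_T^{1-\cc}$), and checking its compatibility with the local projection $p_f^-$, one identifies $\mathfrak{Z}_2$ with $p_f^-({\rm loc}_\pp(\kappa_p(E)))$, giving the desired vanishing. I expect this bookkeeping of the multiplication-by-$T$ convention through $p_f^-$---essentially routine but requiring care because $\Psi_T^{1-\cc}$ becomes trivial at $T=0$---to be the main technical step.
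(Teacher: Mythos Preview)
Your proposal is correct and takes essentially the same approach as the paper's proof, which likewise combines Theorem~\ref{thm:ERL}, Proposition~\ref{prop:factor-L}, and Theorem~\ref{thm:BD-theta} together with the isomorphism property of $\exp_\pp^*$ in \eqref{eq:BK} to deduce that ${\rm res}_\pp(\kappa_p(E))$ has trivial image in $\rH^1(K_\pp,V_f^-)$. The paper's version is more compressed---it records the equivalence $\exp_\pp^*({\rm res}_\pp(\kappa_p(E)))=0 \Leftrightarrow L(f/K,1)\cdot L(f/K\otimes\chi,1)=0$ directly rather than unwinding your chain of implications, and does not pause on the multiplication-by-$T$ bookkeeping you flag---but the substance is the same.
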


\begin{proof}
By \eqref{eq:inBDP}, the inclusion $\kappa_p(E)\in{\rm Sel}(K,V_pE)$ holds if and only if ${\rm res}_{\pp}(\kappa_p(E))$ vanishes under the natural map $\rH^1(K_\pp,V_f)\rightarrow\rH^1(K_\pp,V_f^-)$. Since from the combination of Theorem~\ref{thm:ERL},
Proposition~\ref{prop:factor-L}, and Theorem~\ref{thm:BD-theta} yield the equivalence 
\[
{\rm exp}_{\pp}({\rm res}_\pp(\kappa_p(E)))=0\quad\Longleftrightarrow\quad 
L(f/K,1)\cdot L(f/K\otimes\chi,1)=0,
\]
and the Bloch--Kato dual in \eqref{eq:BK} is an isomorphism, the result follows.
\end{proof}

Denote by $E^K$ the twist of $E$ by the quadratic character corresponding to $K/\Q$.

\begin{cor}\label{cor:GKC-Sel-Q}
Suppose $L(E,1)=0$ and $L(E^K,1)\neq 0$. Then $\kappa_p(E)\in{\rm Sel}(\Q,V_pE)$.
\end{cor}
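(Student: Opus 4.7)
\medskip

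\noindent\textbf{Proof plan.} The plan is to reduce the statement to Proposition~\ref{prop:GKC-Sel} plus the vanishing of the Selmer group of the quadratic twist $E^K$, via a $\pm$-eigenspace decomposition of ${\rm Sel}(K,V_pE)$ under complex conjugation.

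First, I would observe that the Artin factorisation
\[
L(f/K,s)=L(E,s)\cdot L(E^K,s)
\]
together with the hypothesis $L(E,1)=0$ immediately gives $L(f/K,1)=0$. Hence Proposition~\ref{prop:GKC-Sel} applies and yields $\kappa_p(E)\in{\rm Sel}(K,V_pE)$. This is the only ``analytic input'' coming from the explicit reciprocity law plus the factorisation of $p$-adic $L$-functions.

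Next, since $p$ is odd, the non-trivial element $\cc\in{\rm Gal}(K/\Q)$ acts semisimply on $\rH^1(K,V_pE)$, and inflation--restriction together with the natural isomorphism ${\rm Ind}_K^\Q\mathbf{1}\simeq\mathbf{1}\oplus\eta_{K/\Q}$ identifies
\[
\rH^1(K,V_pE)\simeq\rH^1(\Q,V_pE)\oplus\rH^1(\Q,V_pE\otimes\eta_{K/\Q}),
\]
where $V_pE\otimes\eta_{K/\Q}\simeq V_pE^K$. One checks routinely that at each finite place the local conditions defining the Greenberg/Bloch--Kato Selmer groups are compatible with this decomposition (at $p$ this uses that $V_pE^\pm$ are $G_{\Q_p}$-stable, so the $\pm$-eigenspaces preserve the local filtration), yielding
\[
{\rm Sel}(K,V_pE)\simeq{\rm Sel}(\Q,V_pE)\oplus{\rm Sel}(\Q,V_pE^K).
\]

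Finally, because $L(E^K,1)\neq 0$, the Kolyvagin--Gross--Zagier theorem (applied to $E^K$, which has analytic rank zero) forces $E^K(\Q)$ and the $p$-primary part of the Tate--Shafarevich group $\Sha(E^K/\Q)[p^\infty]$ to be finite, hence ${\rm Sel}(\Q,V_pE^K)=0$. Combining with step~(2) shows that $\kappa_p(E)$ lies in the first summand ${\rm Sel}(\Q,V_pE)$, as claimed. The argument is essentially formal given the preceding proposition; the only non-trivial point is the Kolyvagin input in the final step, and even this could alternatively be replaced by the Bertolini--Darmon/Skinner--Urban divisibility (at $T=0$) used in Theorem~\ref{thm:BD-SU} applied to the twist.
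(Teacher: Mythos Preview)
Your proof is correct and follows essentially the same route as the paper: reduce to Proposition~\ref{prop:GKC-Sel} via the factorisation $L(f/K,s)=L(E,s)L(E^K,s)$, then use the eigenspace decomposition ${\rm Sel}(K,V_pE)\simeq{\rm Sel}(\Q,V_pE)\oplus{\rm Sel}(\Q,V_pE^K)$ and kill the second summand using $L(E^K,1)\neq 0$. The only difference is that the paper invokes Kato's Euler system to obtain ${\rm Sel}(\Q,V_pE^K)=0$, whereas you cite Kolyvagin--Gross--Zagier; both inputs are valid here and the argument is otherwise identical.
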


\begin{proof}
Since Kato's results \cite{Kato295} show that ${\rm Sel}(\Q,V_pE^K)=0$ when $L(E^K,1)\neq 0$, this follows from Proposition~\ref{prop:GKC-Sel} and the isomorphism ${\rm Sel}(K,V_pE)\simeq{\rm Sel}(\Q,V_pE)\oplus{\rm Sel}(\Q,V_pE^K)$.
\end{proof}

\begin{rem}
The proof of Proposition~\ref{prop:GKC-Sel} show that when $L(f/K,1)=0$, the class $\kappa_p(E)$ lies in fact in the kernel of the restriction map on ${\rm Sel}(K,V_pE)$ at the primes above $p$. In particular, in the setting of Corollary~\ref{cor:GKC-Sel-Q}, $\kappa_p(E)$ lands in the \emph{strict} Selmer group 
\[
{\rm Sel}_0(\Q,V_pE)={\rm ker}\left\{{\rm Sel}(\Q,V_pE)\xrightarrow{{\rm loc}_p} E(\Q_p)\hat\otimes\Q_p\right\}.
\]
\end{rem}

\subsection{Nonvanishing of $\kappa_p(E)$ in rank two}

Denote by
\[
\rho_f:G_\Q\rightarrow{\rm Aut}_{\mathbf{F}_p}(E[p])\simeq{\rm GL}_2(\mathbf{F}_p)
\]
the mod $p$ Galois representation associated to $E$, and assume that:
\begin{itemize}
\item[(h1)] $\bar{\rho}_f$ is absolutely irreducible,
\item[(h2)] there exists a prime $q\Vert N_f$, and if $q\equiv\pm{1}\pmod{p}$ then $\bar{\rho}_f$ is ramified at $q$,
\item[(h3)] either $p>5$ of the image of $\bar{\rho}_f$ contains a conjugate of ${\rm GL}_2(\mathbb{F}_p)$.
\end{itemize}
We shall consider generalised Kato classes as in \S\ref{subsec:GKC} attached to the following.

\begin{choice}\label{choice:setting}
Let $q$ be a prime as in (h2) above, and let $\ell\nmid 6pN_f$ be a prime. Choose an imaginary quadratic field $K$ and a ring class character $\chi$ of $K$ of conductor dividing $\ell^\infty\cO_K$ such that:
\begin{enumerate}
\item $q$ is inert in $K$,
\item every prime factor of $N_f/q$ splits in $K$,
\item $\ell$ splits in $K$,
\item $\ell$ is ordinary for $E$,
\item $L(E^K,1)\neq 0$ and $L(E/K,\chi,1)\neq 0$,
\item $\chi\vert_{G_{K_\pp}}\neq 1$.
\end{enumerate}
\end{choice}

\begin{rem}
The existence of (infinitely many) $K$ satisfying (1)--(3) and such that $L(E^K,1)\neq 0$ follows from \cite{BFH90}; for any such $K$, the results of \cite{vatsal-special} (see also \cite[Thm.~D]{ChHs1}) ensure that $L(E/K,\chi,1)\neq 0$ for all but finitely many $\chi$ of $\ell$-power conductor. 
\end{rem}


Fix $(K,\chi)$ as in Choice~\ref{choice:setting}. Write $\chi=\psi/\psi^{\cc}$ with $\psi$ a ray class character modulo $\ell^m\cO_K$ for some $m>0$, let $(\bfg,\bfg^*)=(\boldsymbol{\theta}_{\psi}(S_1),\boldsymbol{\theta}_{\psi^{-1}}(S_2))$ be the associated primitive CM Hida families as in (\ref{eq:gg*}), and let $\kappa_p(E)\in\rH^1(K,V_pE)$ the corresponding generalised Kato class. 

By Corollary~\ref{cor:GKC-Sel-Q}, when $L(E,1)=0$ we have $\kappa_p(E)\in{\rm Sel}(\Q,V_pE)$. Moreover, by our choice of $K$ and $\chi$, when further $E/\Q$ has sign $+1$ (so in particular ${\rm ord}_{s=1}L(E,s)\geq 2$), the nonvanishing conjecture of Darmon--Rotger \cite[Conj.~3.2]{DR2.5} (as specialised in [\emph{op.\,cit.}, \S{4.5.3}] to the setting of rank two elliptic curves) predicts the equivalence
\[
\kappa_p(E)\neq 0\quad\overset{?}\Longleftrightarrow\quad{\rm dim}_{\Q_p}{\rm Sel}(\Q,V_pE)=2.
\]
The following is the main result of this note in the direction of this conjecture.

\begin{thm}\label{thm:main}
Suppose that $L(E,s)$ vanishes to positive even order at $s=1$. 
 Then
\[
\kappa_p(E)\neq 0\quad\Longrightarrow\quad{\rm dim}_{\bQ_p}{\rm Sel}(\bQ,V_pE)=2.
\]
Conversely, if ${\rm dim}_{\bQ_p}{\rm Sel}(\Q,V_pE)=2$ then $\kappa_p(E)\neq 0$ if and only if the restriction map
\[
{\rm loc}_p:{\rm Sel}(\bQ,V_pE)\rightarrow E(\bQ_p)\hat\otimes\bQ_p
\]
is nonzero. Moreover, in that case $\kappa_p(E)$ spans the strict Selmer group ${\rm Sel}_0(\Q,V_pE)={\rm ker}({\rm loc}_p)$.
%
\end{thm}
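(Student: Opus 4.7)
My strategy is to deduce the theorem from the balanced Iwasawa--Greenberg main conjecture (IMC--2), specialised at the weight-$2$ point $T=0$. First, I establish IMC--2 by combining the anticyclotomic Iwasawa main conjecture (Theorem~\ref{thm:BD-SU}) for $f/K$ and its twist $f/K\otimes\chi$ (whose hypotheses are preserved by Choice~\ref{choice:setting}) with the factorisations of $p$-adic $L$-functions (Proposition~\ref{prop:factor-L}) and Selmer groups (Corollary~\ref{cor:factor-S}). This yields the $\bff$-unbalanced main conjecture (IMC--1), which by Proposition~\ref{prop:equiv} is equivalent to IMC--2; in particular ${\rm Sel}^{\rm bal}(\Q,\Vsdagg)$ has $\Lambda$-rank $1$, the big diagonal class $\kappa(f,\ggstar)$ generates a rank-$1$ submodule modulo torsion, and the equality
\[
{\rm char}_\Lambda(X^{\rm bal}(\Q,\Asdagg)_{\rm tors})={\rm char}_\Lambda({\rm Sel}^{\rm bal}(\Q,\Vsdagg)/(\kappa(f,\ggstar)))^2
\]
holds in $\Lambda\otimes\Q_p$.

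Next, I identify the $T=0$ fibre. From~\eqref{eq:dec-S-gg}, the second summand is finite at $T=0$ (since $\Theta_{f/K,\chi}(0)\neq 0$ from $L(f/K\otimes\chi,1)\neq 0$, and the anticyclotomic IMC for the $\chi$-twist forces finiteness at $W_2=0$), so the rank-$1$ contribution to ${\rm Sel}^{\rm bal}(\Q,\Vsdagg)$ lives in the first summand ${\rm Sel}_{\emptyset,0}(K,T_f\otimes\Psi_T^{1-\cc})$, and $\kappa_p(E)$ is the specialisation at $T=0$ of (the first-summand projection of) $\kappa(f,\ggstar)$. Mazur's control theorem applied to both ${\rm Sel}^{\rm bal}(\Q,\Vsdagg)$ and $X^{\rm bal}(\Q,\Asdagg)$ relates the $\mathscr{O}$-ranks of their specialisations at $T=0$ to dimensions of Selmer groups for $V_pE$ over $K$ with various local conditions at $\pp$ and $\ppbar$, which in turn---via complex conjugation together with ${\rm Sel}(\Q,V_pE^K)=0$ (Kato's theorem, from $L(E^K,1)\neq 0$)---translate into constraints involving $\dim{\rm Sel}(\Q,V_pE)$ and $\dim{\rm Sel}_0(\Q,V_pE)=\dim\ker({\rm loc}_p)$.

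Finally, the IMC--2 equality constrains the orders of vanishing at $T=0$ of both sides, and together with the control-theoretic translations forces that $\kappa_p(E)\neq 0$ implies $\dim_{\Q_p}{\rm Sel}_0(\Q,V_pE)\leq 1$. Combined with $\kappa_p(E)\in{\rm Sel}_0(\Q,V_pE)\smallsetminus\{0\}$ and the parity hypothesis that $\dim{\rm Sel}(\Q,V_pE)$ is positive even, this yields $\dim{\rm Sel}(\Q,V_pE)=2$ and ${\rm loc}_p\neq 0$, establishing the first implication. The converse proceeds symmetrically: assuming $\dim{\rm Sel}(\Q,V_pE)=2$, the same IMC--2/control analysis shows that $\kappa_p(E)\neq 0$ iff $\dim{\rm Sel}_0=1$ iff ${\rm loc}_p\neq 0$, with $\kappa_p(E)$ necessarily spanning ${\rm Sel}_0(\Q,V_pE)$ in that case. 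The main technical obstacle is the careful bookkeeping required for Mazur's control theorem in the relaxed--strict Selmer setting (as opposed to the standard ordinary one), tracking local correction terms at $\pp$ and $\ppbar$ via global Tate duality and the $\cc$-eigendecomposition, in order to convert the characteristic-ideal identity of IMC--2 into the precise nonvanishing-and-dimension statement of the theorem.
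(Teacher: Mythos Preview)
Your proposal follows essentially the same route as the paper: establish IMC--1 from Theorem~\ref{thm:BD-SU} and Proposition~\ref{prop:factor-L} via the Selmer decomposition of Corollary~\ref{cor:factor-S}, pass to IMC--2 through Proposition~\ref{prop:equiv}, project onto the first summand in~\eqref{eq:dec-S-gg} to isolate ${\rm Sel}_{\emptyset,0}(K,T_f\otimes\Psi_T^{1-\cc})$, and then specialise at $T=0$ using the control result of Proposition~\ref{prop:control}. The paper packages the final step slightly differently---it proves directly the equivalence $\kappa_p(E)\neq 0\Leftrightarrow\dim{\rm Sel}_{\emptyset,0}(K,V_pE)=1$ and then invokes the trichotomy of Lemma~\ref{lem:-1} to compare ${\rm Sel}_{\emptyset,0}(K,V_pE)$ with ${\rm Sel}(K,V_pE)$---whereas you aim at $\dim{\rm Sel}_0(\Q,V_pE)$ first; but these are rearrangements of the same global-duality bookkeeping you mention.

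One point needs correction. You invoke ``the parity hypothesis that $\dim{\rm Sel}(\Q,V_pE)$ is positive even,'' but the theorem only assumes that $L(E,s)$ vanishes to positive even order at $s=1$; the parity of $\dim_{\Q_p}{\rm Sel}(\Q,V_pE)$ is not given. The paper handles this as follows: having reduced to $\dim{\rm Sel}_{\emptyset,0}(K,V_pE)=1$, it rules out the case $\delta=0$ in Lemma~\ref{lem:-1} using Theorem~\ref{thm:BD-SU} (the characteristic ideal of the ordinary Selmer group being a square forces even corank at $T=0$), and rules out $\delta=1$ by appeal to Nekov\'a\v{r}'s $p$-parity theorem \cite{nekovarII}. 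You should make explicit that the Selmer parity is a consequence of these inputs, not a standing hypothesis. A minor secondary point: for the $\chi$-twisted summand you only need finiteness of ${\rm Sel}(K,A_f\otimes\chi)$ from $L(f/K\otimes\chi,1)\neq 0$, which follows already from the Euler-system divisibility in \cite{bdIMC}; the full anticyclotomic IMC for the twist is not required.
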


\begin{rem}
Theorem~\ref{thm:main} recovers \cite[Thm.~A]{cas-hsieh-ord} under slightly weaker hypotheses on $\bar{\rho}_f$, and moreover, shows that the condition ${\rm Sel}(\Q,V_pE)\neq{\rm ker}({\rm loc}_p)$ is \emph{necessary} for the nonvanishing of $\kappa_p(E)$ when ${\rm Sel}(\Q,V_pE)$ is $2$-dimensional.
\end{rem}

\begin{rem}
When ${\rm Sel}(\Q,V_pE)$ is $2$-dimensional and different from ${\rm ker}({\rm loc}_p)$, it follows from Theorem~\ref{thm:main} that
\[
\kappa_p(E)=\lambda\cdot\bigl({\rm log}_p(Q)P-{\rm log}_p(P)Q\bigr),
\]
for some $\lambda\in\Q_p^\times$, where $(P,Q)$ is any basis for ${\rm Sel}(\Q,V_pE)$ and ${\rm log}_p$ 
is the composition of ${\rm loc}_p$ with the formal group logarithm   
on $E(\bQ_p)\hat\otimes\Q_p$. 
This is consistent with the refined conjecture in \cite[Conj.~3.12]{DR2.5}  specialised to the setting of [\emph{op.\,cit.},\S{4.5.3}], which further predicts a rationality statement for $\lambda$ (see also \cite[Conj.~3.4, Rem.~3.5.2]{BSV-garrett}).
\end{rem}

The rest of this section is devoted to the proof of Theorem~\ref{thm:main}.

\subsection{Some Galois cohomology} 

Put $K_p=K\otimes_{\Q}\Q_p\simeq K_\pp\oplus K_{\ppbar}$. Let
\begin{equation}\label{eq:locp-H}
{\rm loc}_p={\rm loc}_\pp\oplus{\rm loc}_{\ppbar}:\rH^1(G_{K,\Sigma},V_pE)\rightarrow\rH^1(K_p,V_pE)
\end{equation}
be the restriction map, and denote by $X_f$ (resp. $X_{\pp}$) the image of ${\rm Sel}(K,V_pE)$ (resp. ${\rm Sep}_{\emptyset,0}(K,V_pE)$) under ${\rm loc}_p$.

\begin{lem}\label{lem:-1}
We have ${\rm dim}_{\Q_p}{\rm Sel}_{\emptyset,0}(K,V_pE)={\rm dim}_{\Q_p}{\rm Sel}(K,V_pE)+\delta$, where
\[
\delta=\begin{cases}
-1&\textrm{if $X_f\neq 0$,}\\[0.2em]
0 &\textrm{if $X_f=X_\pp=0$,}
\\[0.2em]
1&\textrm{if $X_f=0$ and $X_{\pp}\neq 0$.}
\end{cases}
\]
\end{lem}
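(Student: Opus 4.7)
The plan is to compare $\dim{\rm Sel}(K,V_pE)$ and $\dim{\rm Sel}_{\emptyset,0}(K,V_pE)$ by introducing the two auxiliary Greenberg Selmer groups $S_{+,0}$ and $S_{\emptyset,+}$ with local conditions $(V_f^+,0)$ and $(V_f,V_f^+)$ at $(\pp,\ppbar)$. Writing $S:={\rm Sel}(K,V_pE)$, $S_{\rm rel}:={\rm Sel}_{\emptyset,0}(K,V_pE)$, and $S_{0,0}$ for the strict Selmer group (local condition $0$ at both primes above $p$), one has $S_{+,0}=S\cap S_{\rm rel}$, and both $S$ and $S_{\rm rel}$ sit inside $S_{\emptyset,+}$. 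Setting $a:=\dim S/S_{+,0}$, $b:=\dim S_{\rm rel}/S_{+,0}$ and $c:=\dim S_{+,0}/S_{0,0}$, the difference $\delta=\dim S_{\rm rel}-\dim S$ equals $b-a$, while $\dim X_f=a+c$ and $\dim X_\pp=b+c$, so the lemma amounts to a constraint on the triple $(a,b,c)\in\{0,1\}^3$.

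Step one is to bound $a+b\leq 1$ by applying the Greenberg--Wiles formula to the dual pair $(S_{+,\emptyset},S_{+,0})$. After accounting for the archimedean contributions, this yields $\dim S_{+,\emptyset}-\dim S_{+,0}=1$; combined with the identity $\dim S_{+,\emptyset}=\dim S_{\emptyset,+}$ coming from the action of ${\rm Gal}(K/\Q)$ on Galois cohomology, one obtains $\dim S_{\emptyset,+}/S_{+,0}=1$, so that $(S+S_{\rm rel})/S_{+,0}\hookrightarrow S_{\emptyset,+}/S_{+,0}$ forces $a+b\leq 1$.

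Step two is the reciprocity input. For $\eta\in S_{+,0}$ and $\eta'\in S_{\rm rel}$, global duality together with the strict condition at $\ppbar$ and the self-annihilating unramified conditions elsewhere reduce the global reciprocity law to the single equation $\langle{\rm loc}_\pp(\eta),{\rm loc}_\pp(\eta')\rangle_\pp=0$. Since the Weil pairing makes $H^1(K_\pp,V_f^+)$ its own annihilator inside $H^1(K_\pp,V_f)$, the hypothesis $c=1$, i.e.\ ${\rm loc}_\pp(S_{+,0})=H^1(K_\pp,V_f^+)$, forces ${\rm loc}_\pp(S_{\rm rel})\subset H^1(K_\pp,V_f^+)$, whence $S_{\rm rel}=S_{+,0}$ and $b=0$. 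A companion application uses complex conjugation $\cc$: for any such $\eta$, the class $\cc\eta$ has vanishing $\pp$-localisation and nonzero $\ppbar$-localisation landing in the conjugate ordinary subspace, so $\cc\eta\in S\setminus S_{+,0}$, yielding $a=1$.

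The constraints $a+b\leq 1$ and $c=1\Rightarrow(a,b)=(1,0)$ leave only the four configurations $(a,b,c)\in\{(0,0,0),(1,0,0),(0,1,0),(1,0,1)\}$, which via the formulas $\dim X_f=a+c$, $\dim X_\pp=b+c$ and $\delta=b-a$ match precisely the three cases $(X_f=X_\pp=0,\delta=0)$, $(X_f\neq 0,\delta=-1)$ and $(X_f=0,X_\pp\neq 0,\delta=1)$ of the lemma. The main technical point is the dimension count in step one, and the conceptually novel input is the complex conjugation trick in step two, which hinges on $\cc$ exchanging the two local ordinary subspaces $V_f^+\subset V_f$ at the primes above $p$.
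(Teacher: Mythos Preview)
Your argument is correct. Both proofs rest on Poitou--Tate duality and the action of complex conjugation exchanging the primes above $p$, but the organisation is genuinely different from the paper's.

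The paper proceeds by a direct case split on $X_f$. When $X_f\neq 0$, it observes that complex conjugation forces both ${\rm loc}_\pp$ and ${\rm loc}_{\ppbar}$ to be nonzero on ${\rm Sel}(K,V_pE)$ and then invokes \cite[Lem.~2.2]{APAW} to obtain $\delta=-1$. When $X_f=0$, it uses the exact sequence $0\to{\rm Sel}_{\rm str}\to{\rm Sel}_{\emptyset,0}\to\rH^1(K_\pp,V_pE)$ and appeals to \cite[Lem.~2.3.1]{skinner} to exclude $\dim X_\pp=2$. Your approach instead introduces the intermediate Selmer groups $S_{+,0}$ and $S_{\emptyset,+}$, proves the single numerical identity $\dim S_{\emptyset,+}/S_{+,0}=1$ directly from the Greenberg--Wiles formula, and then pins down the quadruple $(a,b,c,\delta)$ by the reciprocity/conjugation argument in your step two. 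In effect you are reproving the content of the two cited lemmas from scratch rather than quoting them, and your casework on $(a,b,c)$ treats all three values of $\delta$ uniformly.

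What each approach buys: the paper's proof is shorter on the page because the Poitou--Tate computations are outsourced, but it is opaque unless one looks up the references. Your proof is self-contained and makes the mechanism transparent; in particular, your ``$c=1\Rightarrow(a,b)=(1,0)$'' step isolates precisely where the self-duality of the ordinary local condition and the $\cc$-symmetry enter. One small point worth making explicit in a write-up: the bounds $a,b,c\le 1$ follow from the injections of the relevant quotients into the one-dimensional local pieces $\rH^1(K_{\ppbar},V_f^+)$, $\rH^1(K_\pp,V_f^-)$ and $\rH^1(K_\pp,V_f^+)$ respectively, which in turn uses $\alpha_p\neq 1$ (automatic for $p>3$ by the Weil bound).
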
 

\begin{proof}
Since $X_f$ is invariant under the action of complex conjugation, if $X_f$ is nonzero then the restriction maps ${\rm Sel}(K,V_pE)\rightarrow\rH^1(K_\qq,V_pE)$ for $\qq\in\{\pp,\ppbar\}$ are both nonzero, and so in this case the result follows from \cite[Lem.\,2.2]{APAW}. 

Now suppose $X_f=0$, so that ${\rm Sel}(K,V_pE)$ is the same as the strict Selmer group ${\rm Sel}_{\rm str}(K,V_pE):={\rm ker}({\rm loc}_p)$, and  consider the exact sequence
\begin{equation}\label{eq:loc-pp}
0\rightarrow{\rm Sel}_{\rm str}(K,V_pE)\rightarrow{\rm Sel}_{\emptyset,0}(K,V_pE)\xrightarrow{{\rm loc}_\pp}\rH^1(K_\pp,V_pE).
\end{equation}
If $X_\pp=0$, then \eqref{eq:loc-pp} yields the result, so it remains to consider the case $X_\pp\neq 0$. If ${\rm dim}_{\Q_p}X_\pp=2$, then the image of \eqref{eq:locp-H} contains $X_\pp\oplus X_{\ppbar}$, where $X_{\ppbar}$ is the image of $X_\pp$ under complex conjugation (equivalently, the image of ${\rm Sel}_{0,\emptyset}(K,V_pE)$), but this contradicts \cite[Lem.\,2.3.1]{skinner}. Therefore if $X_\pp\neq 0$ then it is one-dimensional, and the result in this case follows again from \eqref{eq:loc-pp}.
\end{proof}

\begin{rem}
Standard conjectures predict that $X_f\neq 0$ unless ${\rm Sel}(K,V_pE)=0$ (indeed, the vanishing of $X_f$ when ${\rm Sel}(K,V_pE)\neq 0$ would imply that $\Sha(E/K)[p^\infty]$ is infinite). 
\end{rem}

Recall that we set $\Lambda=\cO\dBr{T}$, and for any $\Lambda$-module $M$ denote by $M_{/T}=M/TM$ the cokernel of multiplication by $T$. The following is a variant of Mazur's control theorem \cite{mazur-18}. 

\begin{prop}\label{prop:control}
Multiplication by $T$ induces natural maps
\begin{align*}
r^*:{\rm Sel}_{0,\emptyset}(K,A_f)&\rightarrow{\rm Sel}_{0,\emptyset}(K,A_f\otimes\Psi_{T}^{\cc-1})[T],\\
r:{\rm Sel}_{\emptyset,0}(K,T_f\otimes\Psi_{T}^{1-\cc})_{/T}&\rightarrow{\rm Sel}_{\emptyset,0}(K,T_f)
\end{align*}
with finite kernel and cokernel.
\end{prop}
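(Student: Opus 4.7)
The plan is to adapt Mazur's classical control theorem \cite{mazur-18} to the mixed-condition anticyclotomic Selmer groups under consideration. Since $r$ and $r^*$ correspond to strict/relaxed local conditions at $\pp$ and $\ppbar$ that get swapped by local Tate duality, I would prove one and deduce the other via Pontryagin duality; I focus on $r$, the argument for $r^*$ being entirely parallel with the short exact sequence $0\to A_f\to A_f\otimes\Psi_T^{\cc-1}\xrightarrow{T} A_f\otimes\Psi_T^{\cc-1}\to 0$ replacing the one below.

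Set $M:=T_f\otimes\Psi_T^{1-\cc}$. The short exact sequence $0\to M\xrightarrow{T} M\to T_f\to 0$ induces, on cohomology over $G_{K,\Sigma}$ and over each local group $G_{K_w}$ for $w\in\Sigma$, long exact sequences that assemble into a commutative diagram comparing the defining four-term sequences of ${\rm Sel}_{\emptyset,0}(K,M)_{/T}$ and ${\rm Sel}_{\emptyset,0}(K,T_f)$. A snake lemma argument then reduces the finiteness of $\ker r$ and $\operatorname{coker} r$ to the finiteness of the kernel and cokernel of the induced global map $\alpha\colon\rH^1(G_{K,\Sigma},M)_{/T}\to\rH^1(G_{K,\Sigma},T_f)$ together with the finiteness of the kernels of the local maps $\beta_w\colon(\rH^1(K_w,M)/\rH^1_{\mathscr{F}}(K_w,M))_{/T}\to\rH^1(K_w,T_f)/\rH^1_{\mathscr{F}}(K_w,T_f)$ for each $w\in\Sigma$.

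For $\alpha$, injectivity follows from the vanishing of $\rH^0(G_{K,\Sigma},T_f)$, a consequence of the absolute irreducibility of $\bar{\rho}_f$ (assumption (h1)), and the cokernel equals $\rH^2(G_{K,\Sigma},M)[T]$; this is finite because $\rH^2(G_{K,\Sigma},M)$ is a finitely generated $\Lambda$-module whose $T$-torsion is controlled, via Poitou--Tate duality, by a finite $\rH^0$ of the Pontryagin dual. For the local maps $\beta_w$, I would treat each place separately: at $w=\pp$ the relaxed condition makes $\rH^1_{\mathscr{F}}(K_\pp,\cdot)$ the full $\rH^1$ for both source and target, so the quotient is trivial; at $w=\ppbar$ the strict condition reduces the analysis to $\rH^1(K_{\ppbar},M)_{/T}\to\rH^1(K_{\ppbar},T_f)$, whose cokernel is the finite group $\rH^2(K_{\ppbar},M)[T]$ (finite by local Tate duality, using that the anticyclotomic character $\Psi_T^{1-\cc}$ restricts non-trivially to a decomposition group at $\ppbar$); and for $w\in\Sigma$ with $w\nmid p$ the anticyclotomic $\Z_p$-extension is unramified at $w$, so the map between unramified local quotients reduces to a standard Frobenius computation. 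The main technical point I anticipate lies in the uniform bookkeeping at the places $w\mid N_f$ that split in $K$, where one must match the tame inertia data of $M$ to that of $T_f$; however, these are routine finiteness statements for local Iwasawa cohomology and pose no real obstruction given the underlying finite-generation properties of $M$ as a $\Lambda$-adic representation.
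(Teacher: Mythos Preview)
Your outline is the standard control-theorem template and is in the right spirit, but there is a real gap in the step you flag as routine.

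You choose to prove the compact case $r$ first. In that direction, the cokernel of the global map
\[
\alpha:\rH^1(G_{K,\Sigma},M)_{/T}\longrightarrow\rH^1(G_{K,\Sigma},T_f)
\]
is indeed $\rH^2(G_{K,\Sigma},M)[T]$, but your justification for its finiteness (``Poitou--Tate duality, by a finite $\rH^0$ of the Pontryagin dual'') does not go through. Global $\rH^2$ is \emph{not} Pontryagin dual to a global $\rH^0$; the nine-term Poitou--Tate sequence only tells you that $\rH^2(G_{K,\Sigma},M)$ maps to $\bigoplus_w\rH^2(K_w,M)$ with cokernel $\rH^0(G_{K,\Sigma},M^*)^\vee$, and the kernel of this localisation map is $\Sha^2(M)\simeq\Sha^1(M^*)^\vee$, which is a Selmer-type object whose $T$-torsion is precisely the sort of thing you are trying to control. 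So as written, the argument is circular, or at best requires an additional input such as a weak-Leopoldt statement that you have not invoked.

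The paper sidesteps this entirely by proving $r^*$ instead. On the discrete side, the short exact sequence $0\to A_f\to A_f\otimes\Psi_T^{\cc-1}\xrightarrow{T}A_f\otimes\Psi_T^{\cc-1}\to 0$ makes the global comparison map
\[
u^*:\rH^1(G_{K,\Sigma},A_f)\longrightarrow\rH^1(G_{K,\Sigma},A_f\otimes\Psi_T^{\cc-1})[T]
\]
\emph{automatically surjective} (its image is the kernel of $T$ on $\rH^1$ by construction), so no $\rH^2$ ever appears. The finite kernel comes from $\rH^0(K_\pp,A_f\otimes\Psi_T^{\cc-1})_{/T}$, which is finite by a direct local computation. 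The paper also organises the snake lemma in two steps, first passing through an auxiliary ${\rm Sel}_{\rm rel}$ with relaxed conditions at all primes above $p$, and then comparing ${\rm Sel}_{\rm rel}$ with the full $\rH^1(G_{K,\Sigma},-)$; this separates the $p$-local analysis from the prime-to-$p$ analysis and makes the bookkeeping cleaner.

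Two smaller remarks. First, your assertion that one can ``deduce the other via Pontryagin duality'' is imprecise: ${\rm Sel}_{\emptyset,0}(K,T_f)$ and ${\rm Sel}_{0,\emptyset}(K,A_f)$ are dual Selmer \emph{structures}, related by a Poitou--Tate exact sequence, not Pontryagin duals of one another; passing from $r^*$ to $r$ this way is not immediate. Second, at $\ppbar$ you compute the cokernel of $\beta_{\ppbar}$, but for the snake-lemma bound on $\operatorname{coker} r$ it is the \emph{kernel} of $\beta_{\ppbar}$ that matters; fortunately that kernel is zero, since $\rH^1(K_{\ppbar},M)_{/T}\hookrightarrow\rH^1(K_{\ppbar},T_f)$ by the long exact sequence.

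The fix is simply to run your argument for $r^*$ rather than $r$, where the surjectivity of the global map is free.
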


\begin{proof}
Letting ${\rm Sel}_{\rm rel}(K,A_f)$ and ${\rm Sel}_{\rm rel}(L,A_f\otimes\Psi_T^{\cc-1})$ be the Selmer groups defined in the same manner as ${\rm Sel}_{0,\emptyset}(K,A_f)$ and ${\rm Sel}_{\emptyset,0}(L,A_f\otimes\Psi_T^{\cc-1})$, respectively, but with the (propagated in the case of $A_f$) relaxed local condition at the primes above $p$, the map $r^*$ fits into the commutative diagram with exact rows
\begin{equation}\label{eq:diag-res}
\resizebox{\displaywidth}{!}{
\xymatrix{
0\ar[r]&{\rm Sel}_{0,\emptyset}(K,A_f)\ar[r]\ar[d]^{r^*}&{\rm Sel}_{\rm rel}(K,A_f)\ar[r]\ar[d]^{s^*}&\rH^1(K_\pp,A_f)\times\frac{\rH^1(K_{\ppbar},A_f)}{\rH^1(K_{\ppbar},A_f)_{\rm div}}\ar[d]^{t^*}\\
0\ar[r]&{\rm Sel}_{0,\emptyset}(K,A_{f}\otimes\Psi_{T}^{\cc-1})[T]\ar[r]&{\rm Sel}_{\rm rel}(K,A_f\otimes\Psi_{T}^{\cc-1})[T]\ar[r]&\rH^1(K_\pp,A_f\otimes\Psi_{T}^{\cc-1})[T]\times\{0\},\nonumber
}}
\end{equation}
where $\rH^1(K_{\ppbar},A_f)_{\rm div}$ denotes the maximal divisible submodule of $\rH^1(K_{\ppbar},A_f)$. 
The map $t^*$ arises from the cohomology long exact sequence associated to
\[
0\rightarrow A_f\rightarrow A_f\otimes\Psi_T^{\cc-1}\xrightarrow{\times T}A_f\otimes\Psi_T^{\cc-1}\rightarrow 0,
\] 
and therefore is surjective with kernel $\rH^0(K_\pp,A_f\otimes\Psi_T^{\cc-1})_{/T}$. Since \cite[Lem.~2.7]{KO} implies that $\#\rH^0(K_\pp,A_f\otimes\Psi_T^{\cc-1})<\infty$, by the Snake Lemma to prove the stated property for $r^*$ it suffices to show that $s^*$ has finite kernel and cokernel. 

The latter map fits into the commutative diagram with exact rows 
\begin{equation}\label{eq:diag-res}
\resizebox{\displaywidth}{!}{
\xymatrix{
0\ar[r]&{\rm Sel}_{\rm rel}(K,A_f)\ar[r]\ar[d]^{s^*}&\rH^1(G_{K,\Sigma},A_f)\ar[r]\ar[d]^{u^*}&\bigoplus_{w\in\Sigma,w\nmid p}\frac{\rH^1(K_w,A_f)}{\rH^1_f(K_w,A_f)}\ar[d]^{v^*}\\
0\ar[r]&{\rm Sel}_{\rm rel}(K,A_f\otimes\Psi_{T}^{\cc-1})[T]\ar[r]&\rH^1(G_{K,\Sigma},A_f\otimes\Psi_{T}^{\cc-1})[T]\ar[r]&\bigoplus_{w\in\Sigma,w\nmid p}\rH^1(K_w,A_{f}\otimes\Psi_{T}^{\cc-1})[T],\nonumber
}}
\end{equation}
where $\rH^1_f(K_w,A_f)$ is the natural  image of $\ker\{\rH^1(K_w,V_f)\rightarrow\rH^1(K_w^{\rm ur},V_f)\}$ (see Definition~\ref{def:Sel-f}). 
The finiteness of $\rH^0(K_\pp,A_f\otimes\Psi_T^{\cc-1})$  implies that of $\rH^0(G_{K,\Sigma},A_f\otimes\Psi_T^{\cc-1})$, and so the map $u^*$ has finite kernel. Since $u^*$ is clearly surjective and by   
\cite[Lem.~3.3]{greenberg-cetraro} (see also \cite[\S{3.3.6}]{JSW}) the kernel of the map $v^*$ is finite, by the Snake Lemma it follows that $s^*$ has the desired properties. 
This shows the result for $r^*$ and the case of $r$ is shown similarly.
\end{proof}

\subsection{Proof of the main result}\label{subsec:proof}

\begin{proof}[Proof of Theorem~\ref{thm:main}]
The decomposition in Corollary~\ref{cor:factor-S} gives
\begin{equation}\label{eq:dec-Sel-A}
X^{\unb}(\Q,\Asdagg)\simeq
{\rm Sel}(K,A_f\otimes\Psi_{T}^{\cc-1})^\vee\oplus\bigl({\rm Sel}(K,A_f\otimes\chi\Psi_{T}^{\cc-1})[T]\bigr)^\vee.
\end{equation}
The same argument as in the proof of Proposition~\ref{prop:control} shows that the natural restriction map
\[
{\rm Sel}(K,A_f\otimes\chi)\rightarrow{\rm Sel}(K,A_f\otimes\chi\Psi_T^{\cc-1})[T]
\]
has finite kernel and cokernel, and therefore from \cite[Cor.~4]{bdIMC} (a consequence of the divisibility in the anticyclotomic main conjecture proved in \emph{op.\,cit.} with $p=\ell$), as extended in \cite{pollack-weston,ChHs2,KPW}), it follows that
\begin{equation}\label{eq:nekovar}
L(E/K,\chi,1)\neq 0\quad\Longrightarrow\quad\#{\rm Sel}(K,A_f\otimes\chi\Psi_{T}^{\cc-1})[T]<\infty.\nonumber
\end{equation}
Together with Theorem~\ref{thm:BD-SU} and Proposition~\ref{prop:factor-L}, in light of \eqref{eq:dec-Sel-A} this shows that $X^{\unb}(\Q,\Asdagg)$ is $\Lambda$-torsion, with
\[
{\rm char}_{\Lambda}\bigl(X^{\unb}(\Q,\Asdagg)\bigr)=\bigl(\mathscr{L}_p^\unb(f,\ggstar)^2\bigr)
\]
in $\Lambda\otimes\Q_p$. 
By Proposition~\ref{prop:equiv}, it follows that $\kappa(f,\ggstar)$ is not $\Lambda$-torsion, that both ${\rm Sel}^{\rm bal}(\Q,\Vsdagg)$ and  $X^{\rm bal}(\Q,\Asdagg)$ have $\Lambda$-rank one, and that
\begin{equation}\label{eq:char-bal}
{\rm char}_{\Lambda}\bigl(X^{\rm bal}(\Q,\AAdag)_{\rm tors}\bigr)={\rm char}_{\Lambda}\bigl({\rm Sel}^{\rm bal}(\Q,\VVdag)/(\kappa(f,\bfg\bfg^*))\bigr)^2\nonumber
\end{equation}
in $\Lambda\otimes\Q_p$. Denote by $\kappa_1(f,\ggstar)\in{\rm Sel}_{\emptyset,0}(K,T_f\otimes\Psi_{T}^{1-\cc})$ the projection of $\kappa(f,\ggstar)$ onto the first direct summand in \eqref{eq:dec-S-gg}. By Proposition~\ref{prop:factor-S} and Corollary~\ref{cor:factor-S}, 
the above shows 
that both ${\rm Sel}_{\emptyset,0}(K,T_f\otimes\Psi_T^{1-\cc})$ and $X_{0,\emptyset}(K,A_f\otimes\Psi_T^{\cc-1})$ have $\Lambda$-rank one, with
%
\begin{equation}\label{eq:char-ideals}
{\rm char}_{\Lambda}\bigl(X_{0,\emptyset}(K,A_f\otimes\Psi_T^{\cc-1})_{\rm tors}\bigr)={\rm char}_{\Lambda}\bigl({\rm Sel}_{\emptyset,0}(K,T_f\otimes\Psi_T^{1-\cc})/(\kappa_1(f,\ggstar))\bigr)^2\nonumber
\end{equation}
in $\Lambda\otimes\Q_p$. By the control theorem of Proposition~\ref{prop:control}, this implies that
\begin{equation}\label{eq:odd}
{\rm corank}_{\Z_p}{\rm Sel}_{0,\emptyset}(K,A_f)={\rm rank}_{\Z_p}\bigl(X_{0,\emptyset}(K,A_f\otimes\Psi_T^{\cc-1})_{/T}\bigr)=1+2\,{\rm rank}_{\Z_p}(\mathfrak{Z}_{/T}),
\end{equation}
where $\mathfrak{Z}={\rm Sel}_{\emptyset,0}(K,T_f\otimes\Psi_T^{1-\cc})/(\kappa_1(f,\ggstar))$. Thus we conclude that
\[
{\rm corank}_{\Z_p}{\rm Sel}_{0,\emptyset}(K,A_f)=1\quad\Longleftrightarrow\quad(\kappa_1(f,\ggstar)\;{\rm mod}\;T)\neq 0,
\]
where $(\kappa_1(f,\ggstar)\;{\rm mod}\;T)$ is the image of $\kappa_1(f,\ggstar)$ in ${\rm Sel}_{\emptyset,0}(K,T_f\otimes\Psi_T^{1-\cc})_{/T}$. Since the natural map 
\[
{\rm Sel}_{\emptyset,0}(K,T_f\otimes\Psi_T^{1-\cc})_{/T}\rightarrow{\rm Sel}_{\emptyset,0}(K,T_f)
\]
has finite kernel by Proposition~\ref{prop:control}, and   
it sends $(\kappa_1(f,\ggstar)\;{\rm mod}\;T)$ to $\kappa_p(E)$ by construction, we arrive at
\begin{equation}\label{eq:GKCnonzero}
\kappa_p(E)\neq 0
\quad\Longleftrightarrow\quad{\rm rank}_{\Z_p}{\rm Sel}_{\emptyset,0}(K,T_f)=1,
\end{equation}
using the action of complex conjugation to reverse the roles of $\pp$ and $\ppbar$. Now, assuming $\kappa_p(E)\neq 0$, by Theorem~\ref{thm:BD-SU} (resp. the $p$-parity conjecture \cite{nekovarII}) the case $\delta=0$ (resp. $\delta=1$) is excluded in Lemma~\ref{lem:-1}, and so ${\rm dim}_{\bQ_p}{\rm Sel}(K,V_pE)=2$; since the nonvanishing of $L(E^K,1)$ implies that ${\rm Sel}(\Q,V_pE)={\rm Sel}(K,V_pE)$ by Kato's work, the first implication in Theorem~\ref{thm:main} follows. 

Conversely, if ${\rm dim}_{\bQ_p}{\rm Sel}(\bQ,V_pE)=2$ (and so ${\rm dim}_{\bQ_p}{\rm Sel}(K,V_pE)=2$), from \eqref{eq:odd} we see that the case $\delta=0$ is excluded in Lemma~\ref{lem:-1} and the case $\delta=-1$ holds (i.e., ${\rm dim}_{\Q_p}{\rm Sel}_{\emptyset,0}(K,V_pE)=1$) if and only if  ${\rm Sel}(\Q,V_pE)\neq{\rm ker}({\rm loc}_p)$. By \eqref{eq:GKCnonzero}, this concludes the proof.
\end{proof}


\begin{rem}
Even though the approach in this paper recovers a strenghtened form of \cite[Thm.\,A]{cas-hsieh-ord} under slightly weaker hypotheses, a noteworthy advantage of the approach in \emph{op.\,cit.} is that --- by relating the derived $p$-adic heights against $\kappa_p(E)$ to the leading coefficient of $\Theta_{f/K}(T)$ (see \cite[Thm.~5.3]{cas-hsieh-ord}) --- it provides a method to numerically verity the nonvanishing of $\kappa_p(E)$, and to relate its rationality properties predicted by \cite[Conj.~3.12]{DR2.5} to  (the leading coefficient part of) the anticyclotomic $p$-adic Birch--Swinnerton-Dyer conjectures formulated in \cite{BDmumford-tate}. (See also \cite{BSV-garrett} for a more general setting.)
\end{rem}

\bibliographystyle{amsalpha}
\bibliography{Schoen-Kato-refs}

\end{document}